\newtheorem{theorem}{Theorem}[section]
\newtheorem{claim}[]{Claim}
\newtheorem{proposition}[theorem]{Proposition}
\theoremstyle{definition}
\newtheorem{definition}[theorem]{Definition}
\newtheorem{conjecture}[theorem]{Conjecture}
\newtheorem{remark}[theorem]{Remark}
\numberwithin{equation}{section}
\newcommand{\dist}{\operatorname{dist}}
\newcommand{\Clos}{\operatorname{Clos}}
\newcommand{\interior}{\operatorname{int}}
\newcommand{\R}{\mathbb{R}}
\newcommand{\bd}{\partial}
\newcommand{\A}{\mathcal{A}}
\newcommand{\C}{\mathcal{C}}
\newcommand{\I}{\mathcal{I}}
\newcommand{\mH}{\mathcal{H}}
\newcommand{\mZ}{\mathbb{Z}}
\title{Generalized $S^1$-stability theorem}
\author{Tongrui Wang}
\address{Institute for Theoretical Sciences, Westlake Institute for Advanced Study, Westlake University, Hangzhou, Zhejiang, 310024, China}
\email{wangtongrui@westlake.edu.cn}
\author{Xuan Yao}
\address{Cornell University, Department of Mathematics, Ithaca, New York 14850}
\email{xy346@cornell.edu}
\date{September 2023}
\begin{document}

\maketitle

\begin{abstract}
    We use the equivariant $\mu$-bubbles technique to prove that for any compact manifold $M^n$
    with non-empty boundary, $n\in\{3,5,6\}$, the Yamabe invariant of $M^n$ is positive if
    and only if the Yamabe invariant of $M^n\times S^1$ is positive. 
    This generalized the $S^1$-stability conjecture of Rosenberg to compact manifolds with boundary. 
\end{abstract}

\section{Introduction}
The study of scalar curvature holds an influential place in the field of differential geometry. 
An intriguing branch in the study of scalar curvature is the topology of manifolds with metrics of positive scalar curvature, which can be fully characterized by the positivity of Yamabe invariant. 

A well-known result in the study of scalar curvature was established by R.Schoen and S.-T. Yau as well as by M.Gromov and B.Lawson:
\begin{theorem}[Geroch Conjecture, \cite{schoen28structure}\cite{gromov1980spin}]\label{conj:geroch}
    The $n$-torus does not admit a Riemannian metric of positive scalar curvature.
\end{theorem}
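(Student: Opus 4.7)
I would prove this by induction, using the stable minimal hypersurface method of Schoen--Yau which is the precursor of the $\mu$-bubble techniques appearing later in this paper. To make the induction work smoothly I would in fact prove the slightly stronger statement: \emph{no closed Riemannian manifold $M^k$ that carries classes $\alpha_1,\ldots,\alpha_k\in H^1(M;\mathbb{Z})$ with $\alpha_1\cup\cdots\cup\alpha_k\neq 0$ admits a metric of positive scalar curvature}. The torus $T^n$ lies in this class via its circle-factor generators.

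\textbf{Base case $k=2$.} The hypothesis forces $b_1(M)\ge 2$, so any such closed surface has genus $g\ge 1$, and Gauss--Bonnet gives $\int_M R\,dA = 8\pi(1-g) \le 0$, incompatible with $R>0$.

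\textbf{Inductive step.} Suppose for contradiction that $(M^k,g)$ satisfies the hypothesis and has $R_g>0$, $k\ge 3$. I would minimize $(k-1)$-volume in the integral homology class Poincar\'e dual to $\alpha_1$; for $k\le 7$ Federer--Fleming regularity produces a smooth, embedded, two-sided stable minimal hypersurface $\Sigma$. Testing the stability inequality against a positive first Jacobi eigenfunction $f$ and using the trace of the Gauss equation
$$R_\Sigma = R_g - 2\,\mathrm{Ric}(\nu,\nu) - |A|^2$$
yields
$$\int_\Sigma \bigl(2|\nabla f|^2 + R_\Sigma f^2\bigr)\,dv_\Sigma \;\ge\; \int_\Sigma \bigl(R_g + |A|^2\bigr) f^2\,dv_\Sigma \;>\; 0,$$
so the conformal Laplacian on $\Sigma$ has positive first eigenvalue, whence $\Sigma$ admits a conformally related metric of positive scalar curvature. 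Moreover, since $\Sigma$ is Poincar\'e dual to $\alpha_1$, the evaluation $(\alpha_2|_\Sigma\cup\cdots\cup\alpha_k|_\Sigma)[\Sigma] = (\alpha_1\cup\cdots\cup\alpha_k)[M]\neq 0$, placing $\Sigma$ in the same class one dimension lower and contradicting the inductive hypothesis.

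\textbf{Main obstacle.} The principal difficulty is the regularity of area minimizers, which breaks down past dimension $7$. Covering all $n$ therefore requires either the Gromov--Lawson spin/Dirac-operator argument (which applies because $T^n$ is spin and enlargeable), or processing the codimension-$\ge 7$ singular set via the generalized minimizing currents and slicing of Lohkamp and of Schoen--Yau, so that both the stability analysis and the topological descent remain valid across the singularities.
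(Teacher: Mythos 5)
Your proposal is correct and is essentially the Schoen--Yau inductive stable-minimal-hypersurface argument, which is exactly the content of the first reference the paper cites for this theorem (the paper itself gives no proof beyond the citations, and your remark on dimensions $\geq 8$ correctly identifies why the second citation, Gromov--Lawson's Dirac-operator/enlargeability method, is also listed). The only step you gloss over is the terminal case $k=3$, where the descended hypersurface is a surface and the conformal Laplacian rescaling must be replaced by integrating the stability inequality with $f\equiv 1$ and invoking Gauss--Bonnet; this is standard and does not affect the argument.
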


In \cite{schoen1987structure}, Schoen-Yau extended the Geroch conjecture to the so-called {\em $K(\pi,1)$ conjecture}, and they also outlined the proof in dimension $4$.
\begin{conjecture}[$K(\pi,1)$ Conjecture]\label{conj: aspherical}
   Any $n$-dimemsional $K(\pi,1)$ closed manifold does not admit a metric with positive scalar curvature.
\end{conjecture}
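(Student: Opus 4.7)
The plan is to argue by contradiction and induction on the dimension $n$, using a codimension-one descent that reduces the $K(\pi,1)$ statement to a lower-dimensional instance, with Theorem~\ref{conj:geroch} on the torus serving both as motivation and as the prototype of the descent. The geometric input is an area-minimizing (or weighted-area minimizing) slice that inherits positive generalized scalar curvature; the topological input is that the aspherical structure is inherited, in a suitable sense, by the slice.

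First, assume for contradiction that a closed $M^n = K(\pi,1)$ admits a metric $g$ with $\mathrm{scal}_g > 0$. Since the universal cover $\widetilde{M}$ is contractible, any non-trivial cohomology class on $B\pi$ pulls back non-trivially to $M$. After passing to a finite cover if necessary, choose a non-zero class $\alpha \in H^1(M;\mZ)$ realized by a smooth map $f\colon M \to S^1$; a regular level set $N = f^{-1}(\mathrm{pt})$ is a smooth closed hypersurface Poincar\'e dual to $\alpha$, and a standard $\pi_1$-injectivity argument identifies a connected component of $N$ with a $K(\pi',1)$ for some subgroup $\pi' \leq \pi$ of cohomological dimension $n-1$.

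Next, replace $N$ by a stable $\mu$-bubble $\Sigma$ minimizing the weighted functional
\[
\A_h(\Omega) \;=\; \mH^{n-1}(\bd^* \Omega) \;-\; \int_{\Omega} h \, d\mH^{n}
\]
within the same integral-homology class, where $h$ is a carefully tuned warping function on the infinite cyclic cover of $M$ dual to $\alpha$. The stability inequality, combined with the Gauss equation and (in low codimension) a conformal change of the induced metric on $\Sigma$, forces $\mathrm{scal}_\Sigma > 0$ on the regular part of $\Sigma$ provided $h$ is chosen to dominate the ambient mean-curvature error term. Applying the inductive hypothesis to an aspherical component of $\Sigma$ then yields the desired contradiction; the base cases $n=2$ (Gauss-Bonnet) and $n=3$ (Perelman's geometrization) are classical.

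The chief obstacle is the regularity of $\Sigma$: by Federer's theorem, area and weighted-area minimizers are smooth only in ambient dimension $n \leq 7$, and beyond that the singular set is positive-dimensional, so the smooth induction breaks down. This is precisely why the full $K(\pi,1)$ conjecture remains open: the scheme above has been carried out rigorously via Schoen-Yau's original minimal-hypersurface descent (through the outlined dimension $n=4$) and via the refined $\mu$-bubble and torical-symmetrization techniques of Gromov and Chodosh-Li (through $n=5$), but a clean resolution of singularities for minimizing hypersurfaces in higher dimensions has not yet been established. A secondary difficulty is the topological step: generic slicing can introduce spherical connected summands into $\Sigma$, so one must invoke Gromov-Lawson-type surgery arguments or work directly with an enlargeability hypothesis to ensure that some component of $\Sigma$ retains the $K(\pi',1)$ property required for the induction to close.
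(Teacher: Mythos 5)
This statement is labeled a \emph{conjecture} in the paper, and the paper offers no proof of it: the authors only record that it has been confirmed in dimensions $n\in\{4,5\}$ by Chodosh--Li and by Gromov, and they go on to use it purely as motivation for their own (different) results. So there is no ``paper's own proof'' to compare against, and your proposal should not be read as a proof either --- which, to your credit, you essentially acknowledge in your final paragraph.

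That said, since you present the descent scheme as if it would close modulo ``technical'' issues, let me flag that the obstructions are more structural than your write-up suggests. First, the very first step may fail to start: a closed aspherical manifold need not have $b_1>0$ (and passing to finite covers does not repair this in general for $n\geq 4$), so there may be no nonzero class $\alpha\in H^1(M;\mZ)$ and no map to $S^1$ to slice along. The actual arguments of Schoen--Yau in dimension $4$ and of Chodosh--Li and Gromov in dimension $5$ work instead on the universal cover, using width/filling-radius estimates and $\mu$-bubble ``slice-and-dice'' constructions, precisely to avoid assuming $b_1>0$. Second, your claim that a component of the slice $N$ is a $K(\pi',1)$ is false in general: neither regular level sets nor area-minimizing representatives of a homology class in an aspherical manifold need be aspherical, and this is a genuine loss of the inductive hypothesis, not a ``secondary difficulty'' fixable by surgery. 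This is why the torus case requires the special ``incompressible'' or Schoen--Yau--Schick structure to propagate through the induction. Third, as you correctly note, regularity of minimizers fails for $n\geq 8$. The combination of these three issues is exactly why Conjecture~\ref{conj: aspherical} remains open outside $n\leq 5$, and why the present paper only proves the much weaker boundary version via $S^1$-stability rather than attacking the conjecture directly.
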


This conjecture was confirmed in dimension $n\in\{4,5\}$ independently by O.Chodosh and C.Li \cite{chodosh2020generalized}
and by M.Gromov \cite{gromov2020no}. A mapping version of this conjecture was later proven by O.Chodosh, C.Li, and L.Yevgeny in \cite{chodosh2023classifying} for dimension $n\in\{4,5\}$.

It is natural to ask whether one can generalize $K(\pi,1)$ Conjecture to manifolds with boundary, which leads us to the following conjecture: 
\begin{conjecture}\label{conj:weak aspherical}
    For any compact $n$-manifold $M^n$ with non-empty boundary, if the double of $M^n$ is a $K(\pi,1)$
    closed manifold, then $\sigma(M^n)$ is not positive.
\end{conjecture}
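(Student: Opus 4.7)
My plan is to reduce Conjecture \ref{conj:weak aspherical} to the closed-manifold $K(\pi,1)$ Conjecture \ref{conj: aspherical} via the classical doubling construction. Suppose, for contradiction, that $\sigma(M^n) > 0$.

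The first step would be to upgrade this hypothesis to the existence of a smooth Riemannian metric $g$ on $M^n$ with strictly positive scalar curvature and with minimal (or at least mean-convex) boundary $\bd M^n$. This is a boundary Yamabe step: $\sigma(M^n)>0$ is detected by the first eigenvalue of the conformal Laplacian coupled with the Robin-type boundary condition $\bd_\nu u + \tfrac{n-2}{2}H_g u = 0$, whose positivity yields a conformal factor that simultaneously achieves $R_g > 0$ and $H_g = 0$. The second step would be to form the double $DM^n := M^n \cup_{\bd M^n} M^n$ and extend $g$ by reflection; because $H_g = 0$, the two mean-curvature contributions at the seam cancel and the resulting metric is Lipschitz, smooth and PSC on each half. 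A standard smoothing near the seam (Miao-style reflection, or a conformal construction in the spirit of \cite{chodosh2023classifying}) then produces a smooth metric $\tilde g$ on $DM^n$ with $R_{\tilde g} > 0$. Since $DM^n$ is $K(\pi,1)$ by hypothesis, this contradicts Conjecture \ref{conj: aspherical}, which is established in dimensions $n\in\{4,5\}$ by \cite{chodosh2020generalized,gromov2020no}; dimension $n=3$ is classical, while $n\geq 6$ remains conditional on the closed case.

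The main obstacle is the boundary deformation in the first step. Positivity of the Yamabe invariant with boundary couples the signs of $R_g$ and $H_g$, so it is not immediately clear that one can arrange $R_g > 0$ and $H_g \geq 0$ simultaneously in a single smooth metric; one must check that the conformal factor produced by the Robin eigenvalue problem lies in the correct Sobolev class and yields a genuinely minimal boundary without degenerating at corners where $\bd M^n$ might fail to be convex.

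A cleaner alternative, closer in spirit to the equivariant $\mu$-bubble philosophy of the present paper, is to bypass the boundary deformation entirely and work directly on $DM^n$ equipped with its canonical $\mZ/2$ reflection symmetry. Any hypothetical PSC metric on $DM^n$ can be averaged to be reflection-invariant, after which the equivariant $\mu$-bubble descent of \cite{chodosh2020generalized,gromov2020no} produces the same contradiction. In this second formulation the dimension restrictions match exactly those of the closed $K(\pi,1)$ conjecture, and the boundary analysis of $(M^n,\bd M^n)$ is replaced by a $\mZ/2$-equivariant analysis on the closed double, which is precisely the setting in which the techniques of this paper operate most naturally.
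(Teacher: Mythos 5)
This statement is a conjecture that the paper does not prove; it only remarks that it would follow from the closed $K(\pi,1)$ Conjecture \ref{conj: aspherical}. Your argument --- conformally arranging $R>0$ with minimal (or mean-convex) boundary from $\sigma(M^n)>0$, doubling in the manner of Gromov--Lawson, and invoking the closed case on the aspherical double --- is exactly that remarked reduction; it is correct as a conditional proof, but it is unconditional only in the dimensions where Conjecture \ref{conj: aspherical} is actually known ($n\le 5$), as you yourself note.
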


\begin{remark} $ $
    \begin{enumerate} 
        \item Conjecture \ref{conj:weak aspherical} is a much weaker version of Conjecture \ref{conj: aspherical}, 
              it is true provided Conjecture \ref{conj: aspherical} is true.
        \item  For compact manifolds, one can easily find $K(\pi,1)$ manifolds with positive Yamabe invariants, for example, the $n$-disks. 
    \end{enumerate}
\end{remark}

Note that an $S^1$-bundle of $K(\pi,1)$ manifold is still a $K(\pi,1)$ manifold and vice versa. 
It is reasonable to investigate the relation between the positivity of Yamabe invariant of a manifold and the positivity of Yamabe invariant of $S^1$-bundle over that manifold. 

In \cite{rosenberg2006manifolds}, J.Rosenberg proposed the following {\em $S^1$-stability Conjecture} for closed manifolds. 
\begin{conjecture}[$S^1$-stability]\label{conj:s1 stability}
Suppose $M^n$ is a closed, connected $n$-manifold, then the Yamabe invariants $\sigma$ of $M$ and $M\times S^1$ satisfy
\begin{align*}
    \sigma(M^n)>0\iff \sigma(M^n\times S^1)>0.
\end{align*}
\end{conjecture}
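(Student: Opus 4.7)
The plan is to treat the two implications separately, with the forward direction being elementary and the reverse being the real content. If $g$ is a metric of positive scalar curvature on $M$, the product metric $g \oplus dt^{2}$ on $M \times S^{1}$ has pointwise positive scalar curvature (scalar curvatures add over Riemannian products and $S^{1}$ is flat), and Kobayashi's characterization of the Yamabe invariant immediately yields $\sigma(M^{n} \times S^{1}) > 0$.

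For the converse direction I would proceed by contrapositive: assume $M$ admits no metric of positive scalar curvature and suppose, for contradiction, that $\bar g$ is a PSC metric on $M \times S^{1}$. Lifting the $S^{1}$-factor produces a complete $\mathbb{Z}$-equivariant PSC metric on $M \times \mathbb{R}$, where $\mathbb{Z}$ acts by translation $T : (x,t) \mapsto (x, t + L)$. The goal is to distill from $\bar g$ a PSC metric on $M$ by constructing an equivariant hypersurface inside $M \times \mathbb{R}$ that descends to a copy of $M$ inside $M \times S^{1}$.

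Following the approach of Chodosh--Li \cite{chodosh2020generalized} and Gromov \cite{gromov2020no}, I would fix a $\mathbb{Z}$-invariant smooth function $h : M \times \mathbb{R} \to \mathbb{R}$ (a prescribed mean-curvature profile) with barrier behaviour at the two ends of large boxes $M \times [-T, T]$, and minimize the $\mu$-bubble functional
\[
\mathcal{A}(\Omega) \;=\; \mathcal{H}^{n}(\partial^{*}\Omega) \;-\; \int_{\Omega} h \, d\mathrm{vol}_{\bar g}
\]
over Caccioppoli sets $\Omega \subset M \times \mathbb{R}$ with fixed boundary behaviour. Standard first- and second-variation analysis provides a smooth stable minimizer $\Sigma$ in the dimensions where regularity holds, and by the $\mathbb{Z}$-equivariance of the data one expects a $T$-invariant minimizer, so that $\Sigma/\mathbb{Z}$ is a smooth closed hypersurface of $M \times S^{1}$. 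Inserting the stability inequality into the conformal trick, with rescaling by the first eigenfunction of the Jacobi operator, then yields a conformal metric on $\Sigma/\mathbb{Z}$ with positive scalar curvature.

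The main obstacle is topological: to conclude $\sigma(M) > 0$ one must identify $\Sigma/\mathbb{Z}$ with $M$ itself, not merely with some hypersurface representing the homology class of $M \times \{\mathrm{pt}\}$. I expect this to require choosing $h$ so that $\Sigma$ is a graph over $M$ and the projection $\Sigma \to M$ is a diffeomorphism, perhaps via a sweep-out or monotonicity argument exploiting the $S^{1}$-direction. In low dimensions this can plausibly be carried through, reflecting why the paper's main theorem is restricted to $n \in \{3, 5, 6\}$; in high dimensions both the regularity of minimizing $\mu$-bubbles and the dimension-raising failure of the conformal trick leave the conjecture open in full generality.
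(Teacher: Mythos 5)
The statement you are addressing is stated in the paper as a \emph{conjecture} (due to Rosenberg), not as a theorem the paper proves: the paper records that it is known for $n=3$ by the classification of PSC $3$-manifolds, was proved by R\"ade for $n\in\{5,6\}$ via $\mu$-bubbles, and — crucially — is \emph{false} for $n=4$ by a Seiberg--Witten counterexample. Any proposal that purports to prove the biconditional for all $n$, or that treats dimension $4$ as merely a technical difficulty, is therefore unsound from the start; your write-up never acknowledges the $n=4$ counterexample, and a correct argument must visibly break there.

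Beyond that, the step you yourself flag as the ``main obstacle'' is resolved incorrectly. After producing a stable $\mu$-bubble $\Sigma$ in a long band $M\times(a,b)\subset M\times\mathbb{R}$ carrying (after the conformal/warped-product trick) a PSC metric, you propose to choose $h$ so that $\Sigma$ is a graph over $M$ with the projection a diffeomorphism. There is no choice of prescribed-mean-curvature profile that forces an area-minimizing $\mu$-bubble to be graphical: the minimizer is obtained by compactness in the class of Caccioppoli sets and can a priori have very different topology from $M$; it is only guaranteed to be a \emph{separating} hypersurface. The route that actually works (R\"ade's, and the equivariant version in Proposition \ref{prop:equivariant surgery} of this paper) is topological rather than analytic: $\Sigma$ cobounds with $M$ inside the band, one kills the kernels of $\pi_1$ and $\pi_2$ of the retraction $W\to M$ by surgeries in the interior of the cobordism, and then handle cancellation (Wall) shows $M$ is obtained from $\Sigma$ by surgeries of codimension $\geq 3$; Gromov--Lawson/Schoen--Yau surgery then transports PSC from $\Sigma$ to $M$. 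This is exactly where the hypothesis $n\geq 5$ enters (the Whitney trick and handle cancellation fail in dimension $4$, consistent with the Seiberg--Witten counterexample), and the upper bound $n\leq 6$ comes from regularity of minimizing $\mu$-bubbles in the $(n+1)$-dimensional band. Finally, note that the global logic of the known proof is a width contradiction: one shows that if \emph{no} separating hypersurface of the band admits PSC then the $\tfrac{2\pi}{n}$-inequality bounds the width of $M\times(a,b)$ uniformly, contradicting $|b-a|\to\infty$; one does not need a $\mathbb{Z}$-invariant minimizer or a quotient hypersurface $\Sigma/\mathbb{Z}$ at all.
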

\begin{remark} $ $
    \begin{enumerate}
        \item This conjecture fails when $n=4$ due to a counterexample depending on the Seiberg-Witten equations (cf. \cite[Remark 1.25]{rosenberg2006manifolds});
        \item This conjecture is true for $n=3$ by the classification of $3$-manifolds with positive scalar curvature (see for instance the lecture notes of O. Chodosh \cite{chodosh2021STABLEMS}).
        \item R{\"a}de \cite{rosenberg2006manifolds} proved this conjecture for dimension $n \in\{5,6\}$. We claim that R{\"a}de's proof can be extended to $n=7$, since in Gromov's notes \cite{gromov2019four}, he claimed that
              one can obtain the generic regularity of $\mu$-bubble in dimension $8$ by following N.Smale's work on the generic regularity of minimal hypersurfaces in dimension $8$. 
    \end{enumerate}
\end{remark}

It is natural to inquire whether one can generalize Conjecture \ref{conj:s1 stability} to compact $n$-manifolds with boundary.
In this paper, we give an affirmative answer to this question in dimension $n\in\{3,5,6\}$.

\begin{theorem}[$S^1$-stability on compact $n$-manifolds]\label{thm:compact s1 stability}
    Suppose $M^n$ is a compact manifold with non-empty boundary, $n\in\{3,5,6\}$, then the Yamabe invariants $\sigma$ of $M$ and $M\times S^1$ satisfy
    \begin{align}
        \sigma(M^n)>0\iff \sigma(M^n\times S^1)>0.
    \end{align}
\end{theorem}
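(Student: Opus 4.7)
The equivalence is established by proving the two implications separately. The forward direction $\sigma(M^n)>0\Rightarrow\sigma(M^n\times S^1)>0$ is elementary: given any PSC metric $g_0$ on $M$ with $H_{g_0}\equiv 0$ on $\partial M$, the product metric $g_0+d\theta^2$ on $M\times S^1$ has positive scalar curvature and minimal boundary $\partial M\times S^1$, so $\sigma(M^n\times S^1)>0$.

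The substance of the theorem lies in the reverse direction. Assume $g$ is a PSC metric on $M\times S^1$ with $H_g\equiv 0$ on $\partial M\times S^1$; the goal is to extract from $g$ a PSC metric on $M$ with minimal boundary. The plan, adapting R\"ade's closed-case strategy, exploits the free $S^1$-action on $M\times S^1$ (rotation of the second factor) through the equivariant $\mu$-bubble technique. Fix an $S^1$-invariant warping function $h$ and $S^1$-invariant barrier regions compatible with the minimal boundary, and minimize the Gromov $\mu$-bubble functional
\[
\mathcal{A}^{h}(\Omega)=\mathcal{H}^{n}\bigl(\partial^{*}\Omega\setminus\partial(M\times S^{1})\bigr)-\int_{\Omega}h\,dV_g
\]
among $S^1$-invariant Caccioppoli sets. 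Since the $S^1$-action is free, any $S^1$-invariant minimizer has the form $\Omega^{*}=\Omega_{0}\times S^{1}$, with reduced boundary $\Sigma_{0}\times S^{1}$ for some hypersurface $\Sigma_{0}\subset M$; regularity of the minimizer in the relevant dimensions $n\in\{3,5,6\}$ (where the bubble has dimension $\leq 6$) is supplied by Schoen--Simon. The second-variation inequality for the stable $\mu$-bubble, combined with $R_g>0$ and a free-boundary condition at $\partial M\times S^{1}$, then produces an $S^1$-invariant warped PSC structure $g_M+f^{2}d\theta^{2}$ on $M\times S^1$ (or a conformal analogue); applying the warped-product conformal change $\tilde g=f^{2/(n-1)}g_M$ on $M$ finally yields a PSC metric on $M$ with $H\equiv 0$ on $\partial M$, so $\sigma(M^n)>0$.

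The principal technical obstacle is the boundary $\partial M\times S^{1}$: the equivariant $\mu$-bubble must satisfy a free-boundary (perpendicularity) condition there, so that the induced warped structure on $M$ retains the minimal boundary property. This forces a careful coordination of the barrier regions, the warping function $h$, and the Neumann/Robin boundary conditions that appear in both the first- and second-variation analyses; it is the principal adaptation of R\"ade's closed-case argument to the present setting. Note finally that the case $n=4$ is excluded precisely because the closed $S^1$-stability already fails in that dimension (Seiberg--Witten counterexamples), so no equivariant $\mu$-bubble argument can succeed there.
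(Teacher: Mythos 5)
The forward direction (product metric with minimal boundary) is fine. The reverse direction, however, does not work as proposed, and it misses the two ideas that carry the paper's proof. First, the $\mu$-bubble functional requires a band structure: two $G$-invariant boundary portions $\partial_-X$, $\partial_+X$ with barrier conditions $H(\partial_\pm X)>\pm h$, and the payoff is a \emph{separating} hypersurface whose existence contradicts a width bound (the $\tfrac{2\pi}{n}$-inequality). Your $M\times S^1$ has no band structure in the $S^1$-direction because $S^1$ is closed; you never pass to the cyclic cover $M\times\mathbb{R}$ and never take bands $M\times(a,b)$ of arbitrarily large width, so the hypothesis $R_g>0$ is never used quantitatively and no contradiction is ever reached. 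Second, even granting a stable $S^1$-invariant bubble of the form $\Sigma_0\times S^1$ with $\Sigma_0\subset M$ an $(n-1)$-dimensional hypersurface, the second-variation inequality yields a warped PSC metric on $(\Sigma_0\times S^1)\times\mathbb{R}$, i.e.\ (after the conformal change) PSC on the \emph{bubble} $\Sigma_0\times S^1$ --- not, as you assert, a warped PSC structure $g_M+f^2d\theta^2$ on the ambient $M\times S^1$. There is no mechanism in your proposal to transfer positivity of scalar curvature from the bubble back to $M$ itself; in the paper this transfer is exactly the content of the equivariant surgery Proposition \ref{prop:equivariant surgery} (showing $M$ is obtained from the separating hypersurface by equivariant surgeries of codimension $\geq 3$) combined with the equivariant Gromov--Lawson/Schoen--Yau surgery Theorem \ref{equivariant gromov-lawson}. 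Omitting this step leaves the argument with no conclusion about $M$.

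The boundary is handled in the paper not by a free-boundary $\mu$-bubble (which you correctly flag as the principal obstacle but then leave entirely unaddressed), but by doubling: from $\sigma(M\times S^1)>0$ one gets, via Escobar, a PSC metric with mean-convex boundary, and the Gromov--Lawson doubling construction produces a $\mathbb{Z}_2$-invariant PSC metric on the closed manifold $DM\times S^1$. One then proves a $\mathbb{Z}_2$-\emph{equivariant} $S^1$-stability theorem (Theorem \ref{thm:equivariant s1 stability}) for the reflection action on the double --- using $\mathbb{Z}_2$-equivariant $\mu$-bubbles in bands $DM\times(a,b)\subset DM\times\mathbb{R}$, the equivariant $\tfrac{2\pi}{n}$-inequality, and the equivariant surgery step --- and finally restricts the resulting $\mathbb{Z}_2$-invariant PSC metric on $DM$ to $M$, obtaining PSC with minimal boundary. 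Note also that the relevant group for the equivariant minimization is the reflection $\mathbb{Z}_2$ on the double, not the rotation $S^1$ (whose orbits would in any case violate the codimension restriction $3\leq\mathrm{codim}(G\cdot x)\leq 7$ only incidentally; the real issue is the lack of a band). Finally, $n=3$ is treated separately by the classification of compact $3$-manifolds, not by the $\mu$-bubble machinery.
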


M.Gromov and B.Lawson \cite{gromov1980spin} first observed that if a compact manifold $M^n$ admits a metric with positive scalar curvature
and mean-convex boundary, then $DM^n$ the double of $M^n$ admits a $Z_2$-metric with positive scalar curvature. 

Inspired by Gromov-Lawson's work, we consider the double of the compact manifold and prove an equivariant version
of $S^1$-stability conjecture in dimension $n\in\{5,6\}$ to prove Theorem \ref{thm:compact s1 stability}.
\begin{theorem}[Equivariant $S^1$-stability]\label{thm:equivariant s1 stability}
    Let $M^n$ be a closed $n$-manifold, where $n\in\{5,6\}$. Suppose the finite group $\mathbb Z_2$ acts on $M^n$ by reflection.
    Then the $\mZ_2$-equivariant Yamabe invariants $\sigma_{\mZ_2}$ of $M$ and $M\times S^1$ satisfy
    \begin{align}
        \sigma_{\mathbb Z_2}(M^n)>0\iff \sigma_{\mathbb Z_2}(M^n\times S^1)>0,
    \end{align}
    where $\mathbb Z_2$ only acts on the $M^n$ of $M^n\times S^1$, the action is the same as that on $M^n$.
\end{theorem}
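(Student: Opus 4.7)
The forward implication $\sigma_{\mZ_2}(M^n) > 0 \Rightarrow \sigma_{\mZ_2}(M^n \times S^1) > 0$ is immediate: if $g$ is a $\mZ_2$-equivariant PSC metric on $M$, then $g + \varepsilon^2 d\theta^2$ on $M \times S^1$ is automatically $\mZ_2$-equivariant (the action is trivial on the $S^1$-factor) and has scalar curvature $R(g) > 0$. The substantive content is the reverse implication, and the plan is to adapt R\"ade's $\mu$-bubble proof of the closed $S^1$-stability conjecture in dimensions $5, 6$ to the $\mZ_2$-equivariant category.

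Assume $\bar g$ is a $\mZ_2$-equivariant PSC metric on $M \times S^1$; lift to the cover $M \times \R$, on which $\mZ_2$ acts by reflection on the $M$-factor and trivially on $\R$. Fix a large slab $M \times [-L, L]$ and choose a $\mZ_2$-invariant smooth function $h$ on the slab that is symmetric in the $\R$-coordinate, blows up at $t = \pm L$, and satisfies Gromov's obstruction $2|\nabla h|_{\bar g} + h^2 < R(\bar g)$ pointwise (arrangeable after conformally rescaling $\bar g$). I would then minimize the $\mu$-bubble functional $\A_h(\Omega) = \mH^n(\bd^* \Omega) - \int_\Omega h\, d\mathrm{vol}_{\bar g}$ among $\mZ_2$-invariant Caccioppoli sets $\Omega$ with $M \times \{-L\} \subset \Omega$ and $\Omega \cap (M \times \{L\}) = \emptyset$. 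A minimizer $\Omega^*$ exists within the invariant class, since both $h$ and $\bar g$ are equivariant and so symmetrizing a minimizing sequence via the Schoen--Yau $\Omega \mapsto \{\Omega \cap \tau\Omega, \Omega \cup \tau\Omega\}$ trick preserves $\A_h$. The reduced boundary $\Sigma = \bd^* \Omega^*$ is smooth because $\dim(M \times S^1) \in \{6, 7\}$ lies below the Federer--Smale singularity threshold.

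The equivariant stability inequality for $\Sigma$, together with the strict inequality imposed on $h$, forces the first eigenvalue of the associated Schr\"odinger-type Jacobi operator on $\Sigma$ to be positive; the first eigenfunction can be chosen $\mZ_2$-invariant, and the standard conformal change then yields a $\mZ_2$-equivariant PSC metric on $\Sigma$. Since $\Sigma$ separates the two ends of $M \times \R$, in $M \times S^1$ it is homologous to $M \times \{\mathrm{pt}\}$. An equivariant bordism argument, combined with a $\mZ_2$-equivariant version of Gromov--Lawson--Schoen--Yau surgery (surgeries performed in pairs on $\mZ_2$-orbits in the free region and compatibly along the fixed locus), then propagates the PSC metric from $\Sigma$ to $M$; the condition $n \geq 5$ guarantees that all relevant surgeries have codimension $\geq 3$.

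The main obstacle is executing each step $\mZ_2$-equivariantly when the fixed locus $F \subset M$ of the reflection is a codimension-one submanifold, so that $F \times S^1 \subset M \times S^1$ is likewise codimension one. Two difficulties stand out: (i) the $\mu$-bubble $\Sigma$ generically meets $F \times S^1$ transversely in a codimension-two submanifold, and one must verify smoothness of $\Sigma$ across this intersection under equivariant minimization, since naive symmetrization could in principle create singular creases along the fixed locus; and (ii) the surgery step must pair handles in $\mZ_2$-orbits in the free region while simultaneously attaching handles based on $F$ in a reflection-compatible way. These equivariant refinements of $\mu$-bubble regularity and of the Gromov--Lawson surgery theorem form the technical heart of the proof, and the dimension restriction $n \in \{5, 6\}$ enters precisely through Federer--Smale regularity and the codimension-$\geq 3$ requirement for the surgery step.
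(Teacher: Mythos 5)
Your overall strategy coincides with the paper's: lift the invariant PSC metric to $M\times\R$, run an equivariant $\mu$-bubble argument in a long band to produce a $\mZ_2$-invariant separating hypersurface $\Sigma$ carrying an invariant PSC metric, and then transfer PSC from $\Sigma$ back to $M$ by equivariant surgeries of codimension at least $3$. One remark before the main criticism: the regularity worry you raise at the fixed locus is already resolved by your own symmetrization observation. For $\mZ_2$, submodularity of perimeter gives $\A_h(\Omega\cap\tau\Omega)+\A_h(\Omega\cup\tau\Omega)\leq 2\A_h(\Omega)$, so the infimum of $\A_h$ over invariant competitors equals the unconstrained infimum; the invariant minimizer is therefore an absolute minimizer, interior regularity in ambient dimension $\leq 7$ is the standard non-equivariant theory, and no crease along the fixed locus can occur. (The paper proves exactly this identity $\I_G=\I$ by Haar-measure averaging, which works for a general compact group.)

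The genuine gap is the step you describe as ``an equivariant bordism argument \ldots\ surgeries performed in pairs on $\mZ_2$-orbits in the free region and compatibly along the fixed locus.'' This is the hardest part of the theorem, and the phrase conceals the actual obstruction: to arrange that $M$ is obtained from $\Sigma$ by surgeries of codimension $\geq 3$, one needs a handle decomposition of the cobordism $W$ between $\Sigma$ and $M$ with all coindices $\geq 3$, and the non-equivariant route to this (R\"ade's use of handle cancellation after making the retraction $2$-connected) does not carry over, because equivariant handle cancellation fails in general. The paper's Proposition 4.1 gets around this by a stratified construction: it kills the relevant homotopy kernels separately on ${\rm Fix}(W)$ and on the free stratum; performs ordinary handle cancellation on the fixed-point cobordism to reach coindex $\geq 2$ there; extends the resulting Morse function to a tubular neighborhood of ${\rm Fix}(W)$ by adding the squared distance to the fixed set, which raises each coindex by exactly one (here the codimension-one nature of the reflection fixed set is essential); and finally interpolates with a generic invariant Morse function away from the fixed set and uses Hanke's special-Morse-function argument to remove the remaining coindex $\leq 2$ critical orbits in the free stratum. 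Without an argument of this kind, your proposal does not produce the codimension-$\geq 3$ equivariant handle decomposition that the equivariant Gromov--Lawson surgery step requires, so the transfer of PSC from $\Sigma$ to $M$ is not justified.
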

\begin{remark}
There are some obstacles in directly generalizing R{\"a}de's proof for the closed
$S^1$-stability Theorem \cite{rosenberg2006manifolds} to the compact manifold with non-empty boundary.
Our proof suggests that the study of Yamabe invariant of compact manifolds can be converted to the study of the equivariant 
Yamabe invariant of closed manifolds.
 \end{remark}

\subsection{Outline of the proofs}
 We first give the detailed proof of equivariant $\mu$-bubble separation Theorem which was stated 
 in Gromov's notes \cite{gromov2019four}. As a Corollary, we prove the equivariant $\frac{2\pi}{n}$-inequality.
 Then we prove an equivariant surgery Proposition, combined with the $G$-invariant version of Gromov-Lawson and Schoen-Yau's surgery result, together with the equivariant $\frac{2\pi}{n}$-inequality, we finish the proof of Theorem \ref{thm:equivariant s1 stability} and Theorem \ref{thm:compact s1 stability}.

\subsection{Organization}
In Section 2, we introduce necessary preliminaries. In Section 3, we begin by establishing the existence and regularity of equivariant $\mu$-bubbles in dimensions less than $8$. Then we proceed to prove the equivariant
$\mu$-bubble separation Theorem and as a corollary, we obtain the equivariant $\frac{2\pi}{n}$-inequality.
In Section 4, we first show an equivariant surgery Proposition and then complete the proof of the main theorems.
\subsection*{Acknowledgements}
Both of the authors would like to thank Professor Xin Zhou for his helpful discussions on this topic and constant encouragement. They would like to thank Professor Yuguang Shi for his suggestion on considering the double of the compact manifold and his interest in this topic. 
The authors would also like to thank Shihang He for his helpful discussion. 
T.W. is partially supported by China Postdoctoral Science Foundation 2022M722844. X.Y is supported by NSF grant DMS-1945178.

\section{Preliminaries}

In this section, we provide essential preliminary concepts concerning Yamabe invariants, doubling manifolds, and Riemannian bands. 

\subsection{Yamabe Invariant}~

As a higher-dimensional generalization for Euler characteristic, the {\em Yamabe invariant} (or {\em $\sigma$-invariant}) for a closed manifold is a min-max defined value associated with a smooth manifold that is preserved under diffeomorphisms.
This differential-topological invariant was first introduced independently by O.Kobayshi \cite{kobayashi1986scalar} and R.Schoen \cite{schoen2006variational}. 
The definition of Yamabe invariant of compact manifolds was first written down by J.Escobar \cite{escobar1996conformal}. 
For manifolds acted diffeomorphically on by a compact Lie group, Hebey and Vaugon \cite{hebey1993probleme} proposed the definition of {\em equivariant Yamabe invariant} in their study of equivariant Yamabe problem. 

For the convenience of readers, we collect some related definitions in this subsection.
\begin{definition}
    Suppose $M^n$ is a closed $n$-manifold, the {\em Yamabe invariant} of $M^n$ is defined as
    \begin{align*}
        \sigma(M^n):=\sup_{[g]\in \mathcal C}Y(M,[g]),
    \end{align*}
    where $\mathcal C$ is the set of all conformal classes of metrics on $M^n$, and $Y(M,[g])$ is the {\em Yamabe constant} for the conformal class $[g]$, defined as
    \begin{align*}
        Y(M,[g]):=\inf_{\tilde{g}\in [g], \text{Vol}(\tilde{g})=1}\int_{M^n}R_{\tilde{g}}dV_{\tilde{g}},
    \end{align*}
    where $R_{\tilde{g}}$ is the scalar curvature of $(M^n,\tilde{g})$.
\end{definition}

One can define the $G$-invariant Yamabe invariant by restricting the conformal classes to the $G$-invariant
conformal classes.

\begin{definition}
    Suppose $M^n$ is a closed $n$-manifold acted diffeomorphically upon by a compact Lie group $G$. The {\em $G$-invariant Yamabe invariant} of $M^n$ is defined as
    \begin{align*}
        \sigma_{G}(M^n):=\sup_{[g]_{G}\in \mathcal C_{G}}Y_{G}(M,[g]_{G}),
    \end{align*}
    where $\mathcal C_{G}$ is the set of all $G$-invariant conformal classes of metrics on $M^n$, and $Y_{G}(M,[g]_G)$ is the {\em $G$-invariant Yamabe constant} for the $G$-invariant conformal class $[g]_{G}$, defined as
    \begin{align*}
        Y_{G}(M,[g]_G):=\inf_{\tilde{g}\in [g]_{G}, \text{Vol}(\tilde{g})=1}\int_{M^n}R_{\tilde{g}}dV_{\tilde{g}},
    \end{align*}
    where $R_{\tilde{g}}$ is the scalar curvature of $(M^n,\tilde{g})$.
\end{definition}

Then we introduce the definition of Yamabe invariant of a compact manifold with non-empty boundary.
\begin{definition}
    Suppose $M^n$ is a compact manifold and its boundary $\partial M$ is non-empty, then the {\em Yamabe invariant} of $M^n$ is defined as
    \begin{align*}
        \sigma(M^n):=\sup_{[g]\in \mathcal C}Y(M,\partial M,[g]),
    \end{align*}
    where $\mathcal C$ is the set of all conformal classes of metrics on $M^n$, and $Y(M,\partial M,[g])$ is the {\em Yamabe constant} for the conformal class $[g]$, defined as
    \begin{align*}
        Y(M,\partial M,[g]):=\inf_{\tilde{g}\in [g], \text{Vol}(\tilde{g})=1}\left[\int_{M^n}R_{\tilde{g}}dV_{\tilde{g}}+2(n-1)\int_{\partial M}H_{\tilde{g}}dA_{\tilde{g}}\right].
    \end{align*}
\end{definition}

\subsection{Topological Preliminaries} $ $

We introduce some basic topological concepts as follows.
\begin{definition}[Double of a manifold]
    Given a compact manifold $M^n$, we use $DM^n$ to denote the double of $M^n$, and it is defined as the smoothing of the boundary of $M^n\times[0,1]$.
\end{definition}

\begin{remark}
    Conceptually, one can view the double of a compact manifold as gluing along the boundary. 
     Since we mainly consider the manifolds with positive scalar curvature, we often refer to \cite[Theorem 5.7]{gromov1980spin} for the construction of doubled manifolds.
\end{remark}

By taking the double of a compact manifold $N$, we obtain a closed manifold $DN$ with $\mZ_2$-symmetry. 
In general, we also say a (Riemannian) manifold $M$ is a closed {\em $G$-manifold} if $G$ is a compact Lie group acting by diffeomorphisms (isometries) on $M$. 

In such a $G$-manifold, Wassermann \cite{wasserman1969equivariant} first formulated the Morse theory in an equivariant setting and constructed $G$-handles decompositions. 

Specifically, given a closed Riemannian $G$-manifold $M$ and a smooth $G$-invariant function $f:M\to \R$ (i.e. $f(g\cdot p)=f(p)$ for all $g\in G, p\in M$), an orbit $G\cdot p$ is said to be a {\em critical orbit} of $f$ if for one (and hence any) point $q\in G\cdot p$ the differential $D_pf$ is zero. 
In addition, a critical orbit $G\cdot p$ is said to be {\em non-degenerated} if for each $q\in G\cdot p$, the Hessian $D^2_qf$ is non-degenerated on the normal space $(T_qG\cdot p)^\perp$ of $G\cdot p$ at $q$. 
The {\em index} (resp. {\em coindex}) of $f$ at the critical orbit $G\cdot p$ is then defined by the index (resp. coindex) of the restricted Hessian $D^2_qf\llcorner (T_qG\cdot p)^\perp$. 
Moreover, $f$ is said to be a {\em $G$-Morse function} if it only has non-degenerate critical orbits. 

\begin{remark}\label{Rem: density Morse function}
    Note this definition is not vacuous since the set of $G$-Morse functions is dense (and open) in the set of smooth $G$-invariant functions equipped with the $C^\infty$-topology (cf. \cite{wasserman1969equivariant}). 
\end{remark}

Since we aim to construct equivariant metrics of positive scalar curvature, we also need the following {\em special $G$-Morse functions} (introduced by Mayer in \cite{mayer1989g}) to control the codimensions of $G$-handles using the arguments in \cite{hanke2008positive}. 

\begin{definition}\label{defn:special morse}
    Let $M$ be a closed $G$-manifold. A $G$-Morse function
    \begin{align*}
        f:M\to\mathbb R
    \end{align*}
    is called {\em special} if for each critical orbit $\mathcal O$, the index of $f$
    at $\mathcal O$ is equal to the index of the restricted $G$-Morse function
    \begin{align*}
        f|_{M_{(H)}}\to\mathbb R
    \end{align*}
    at $\mathcal O$, where $(H)$ is the {\em isotropy type} of $\mathcal O$ defined by the conjugacy class of the isotropy group $G_p:=\{g\in G: g\cdot p=p\}$ for any $p\in\mathcal{O}$, and $M_{(H)}:=\{x\in M: G_x\in (H)\}$ is a $G$-submanifold of $M$ known as the {\em $(H)$-orbit type stratum}.
\end{definition}

At the end of this section, we introduce the definitions associated with Riemannian bands.
\begin{definition}[Band and $G$-invariant band]
    A {\em band} is a compact manifold $X$, and a boundary decomposition
    \begin{align*}
        \partial X=\partial_-X\sqcup \partial_+X.
    \end{align*}
    If $X$ is equipped by a Riemannian metric, we call it a {\em Riemannian band}, and we define the width 
    of $X$ as:
    \begin{align*}
        width(X):=\text{dist}(\partial_{-}X,\partial_+X).
    \end{align*}
    If $X$ is acted diffeomorphically (resp. isometrically) upon by a compact Lie group $G$, and $G\cdot \partial_{\pm}X=\partial_{\pm}X$,
    then we call $X$ a {\em $G$-invariant band} (resp. {\em $G$-invariant Riemannian band}).
\end{definition}

A {\em separating hypersurface} of a band $X$ is a closed hypersurface $\Sigma$ in $X$ such that $\partial_-X$
and $\partial_+X$ are contained in the different connected components in $X\setminus \Sigma$.

In addition, we define the Morse function (resp. $G$-Morse function) on a (resp. $G$-invariant) band $X$ by smooth (resp. $G$-invariant) functions $f:X\to \R$ satisfying
\begin{itemize}
    \item $f$ only has non-degenerate critical points (resp. orbits); 
    \item $f(X)\subset [0, 1]$, $f\llcorner \bd_-X = 0$, and $f\llcorner \bd_+X=1$;
    \item the critical values of $f$ are different from $0$ and $1$;
\end{itemize}
In parallel to Definition \ref{defn:special morse}, one can similarly define special $G$-Morse functions in a $G$-invariant band. 
Moreover, the above definitions can also be directly extended to a ($G$-invariant) cobordism so that the analog of the density result (Remark \ref{Rem: density Morse function}) is still valid.

\section{Equivariant $\frac{2\pi}{n}$-inequality}

In this section, we give a detailed proof of {\em equivariant $\frac{2\pi}{n}$-inequality} which was essentially stated and outlined in Gromov's notes \cite{gromov2019four}.

In \cite{gromov2019four}, 
Gromov proved the so-called {\em $\mu$-bubble separation Theorem}, and as a Corollary, he proved the $\frac{2\pi}{n}$-inequality from the 
$\mu$-bubble separation Theorem. Gromov also stated the equivariant version of the $\mu$-bubble separation
Theorem in \cite{gromov2019four}, we state his result and rewrite his proof in detail.
\begin{theorem}[Equivariant $\mu$-bubble separation, Gromov \cite{gromov2019four}]\label{thm:equivariant mu bubble separation} $ $
    
    Let $(X,g_X)$ be an $n$-dimensional Riemannian band acted isometrically upon by a compact Lie group $G$
    with $G\cdot \bd_\pm X=\bd_\pm X$ and $3\leq {\rm codim}(G\cdot x)\leq 7$ for all $x\in X$.
    Suppose the scalar curvature of $X$ satisfies
    \begin{align*}
        R_{g_X}(x)\geq w(x)+w_1,
    \end{align*}
    where $w(x)\geq 0$ is a continuous equivariant function on $X$, and $w_1$ is a positive constant related to the width $d$ of $X$ by
    \begin{align*}
        w_1d^2>\frac{4(n-1)\pi^2}{n}.
    \end{align*}

    Then there exists a smooth $G$-invariant separating hypersurface $Y\subset X$ and a $G$-invariant positive function $\phi$ on $Y$, 
    such that the scalar curvature of the metric $g_{\phi}=g_Y+\phi(y)^2dt^2$
    on $Y\times \mathbb{R}$ is bounded from below by 
    \begin{align*}
        R_{g_{\phi}}((y,t))\geq w(y).
    \end{align*}
\end{theorem}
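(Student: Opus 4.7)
The plan is to carry out Gromov's $\mu$-bubble construction equivariantly. Let $u:X\to[0,d]$ be a $G$-invariant smoothing of $\dist(\cdot,\bd_-X)$, which exists since both boundary components are $G$-invariant. Using the slack in $w_1d^2>4(n-1)\pi^2/n$, I pick a smooth strictly decreasing $G$-invariant function $h=h(u)$ on $\interior(X)$, of the shape $h(u)=\alpha\cot(\pi u/d)$ with $\alpha>0$ tuned so that $h\to+\infty$ at $\bd_-X$, $h\to-\infty$ at $\bd_+X$, and the pointwise ODE inequality
\begin{equation*}
-2h'(u)\;\leq\;w_1+\tfrac{n}{n-1}\,h(u)^2
\end{equation*}
holds with strict slack throughout $\interior(X)$. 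The critical case corresponds exactly to equality in the width hypothesis, so the strict inequality in the hypothesis provides the cushion needed.

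I then consider the $\mu$-bubble functional
\begin{equation*}
\A(\Omega)\;=\;\mH^{n-1}\!\bigl(\bd^{\ast}\Omega\cap\interior(X)\bigr)\;-\;\int_{\Omega} h\,dV,
\end{equation*}
taken in the standard relative sense against a reference set whose boundary lies in $\interior(X)$, over $G$-invariant Caccioppoli sets $\Omega$ with $\bd_-X\subset\Omega$ and $\Omega\cap\bd_+X=\emptyset$. Since $h$ and $g_X$ are $G$-invariant, $\A$ is $G$-invariant, so BV-compactness restricted to the $G$-invariant subclass produces a $G$-invariant minimizer $\Omega$, and the blow-up of $h$ at $\bd X$ provides barriers forcing $Y:=\bd^{\ast}\Omega$ into $\interior(X)$. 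The main technical hurdle, which I view as the hardest step, is equivariant regularity of $Y$: at each $x\in Y$ the slice theorem gives a Riemannian transverse slice $S_x$ of dimension $k={\rm codim}(G\cdot x)\in[3,7]$, and the $G$-invariance of $\Omega$ turns $\Omega\cap S_x$ into a local minimizer of an induced $H_x$-equivariant weighted $\mu$-bubble functional on $S_x$, the weight being the orbit-volume density of nearby $G$-orbits. Because $\dim S_x\leq 7$, dimensional regularity for weighted prescribed-mean-curvature minimizers applies on $S_x$, and saturating by $G$ yields smoothness of $Y$ as a $G$-invariant embedded hypersurface of $X$. Uniformly patching these slice regularity statements across different orbit-type strata is the delicate point.

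With $Y$ smooth, the first variation gives $H=h$ on $Y$ (for the outward unit normal $\nu$), and the second variation yields the stability inequality
\begin{equation*}
\int_Y |\nabla\psi|^2\;\geq\;\int_Y\bigl(|A|^2+\mathrm{Ric}_{g_X}(\nu,\nu)+\bd_\nu h\bigr)\psi^2\quad\text{for all }\psi\in C^\infty(Y).
\end{equation*}
Doubling this, substituting the Gauss identity $2\,\mathrm{Ric}_{g_X}(\nu,\nu)=R_{g_X}-R_{g_Y}-|A|^2+h^2$, and then using $R_{g_X}\geq w+w_1$, $|A|^2\geq h^2/(n-1)$, $\bd_\nu h\geq h'(u)$ (since $h'\leq 0$ and $|\langle\nabla u,\nu\rangle|\leq 1$), together with the ODE above, collapses the extra terms and yields
\begin{equation*}
2\int_Y|\nabla\psi|^2+\int_Y R_{g_Y}\,\psi^2\;\geq\;\int_Y w\,\psi^2
\end{equation*}
for every $\psi\in C^\infty(Y)$. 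Thus the principal eigenvalue of $L:=-2\Delta_{g_Y}+R_{g_Y}-w$ on $Y$ is non-negative. Its positive first eigenfunction $\phi$ can be chosen $G$-invariant by averaging over $G$ (using the $G$-equivariance of $L$ and positivity of the principal eigenspace), and it satisfies $-2\Delta_{g_Y}\phi+(R_{g_Y}-w)\phi\geq 0$. Therefore the warped product metric $g_\phi=g_Y+\phi(y)^2\,dt^2$ on $Y\times\R$ has scalar curvature $R_{g_\phi}=R_{g_Y}-2\phi^{-1}\Delta_{g_Y}\phi\geq w(y)$, completing the argument.
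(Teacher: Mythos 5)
Your proposal is correct and follows the same overall strategy as the paper's proof: a $G$-invariant $1$-Lipschitz potential, a prescribed-mean-curvature function $h$ blowing up at $\bd_{\pm}X$ (your $\alpha\cot(\pi u/d)$ is the same one-parameter family as the paper's $-(n-1)\tan(\tfrac{n}{2}f)$ after reparametrization), an equivariant $\mu$-bubble, and the stability--Gauss--trace computation $R_{g_X}+\tfrac{n}{n-1}h^2+2\bd_\nu h\geq w$ leading to the positive operator $-2\Delta_{g_Y}+R_{g_Y}-w$ and the warped-product conclusion; that analytic core matches the paper essentially line for line. The one genuine divergence is the existence/regularity step. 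The paper's Proposition 3.3 proves, via an averaging and coarea argument applied to $f(x)=\int_G 1_{\Clos(\hat{\Omega})}(g\cdot x)\,d\mu(g)$, that $\inf_{\C_G(X)}\A_h=\inf_{\C(X)}\A_h$, so the $G$-invariant minimizer is an honest minimizer among \emph{all} competitors; this lets it quote the equivariant regularity theorem of Zhou directly and yields stability for arbitrary variations. You instead minimize only within the $G$-invariant class and sketch a slice-theorem reduction for regularity, which you rightly flag as the delicate point but do not carry out --- this is precisely the content of the result the paper cites, so you should either cite it as well or accept that your proof is incomplete at that step. A further consequence of restricted minimization is that stability a priori holds only for $G$-invariant test functions $\psi$, not all $\psi\in C^\infty(Y)$ as you assert; this is harmless in the end, because the operator $-2\Delta_{g_Y}+R_{g_Y}-w$ commutes with the $G$-action and its principal eigenfunction is automatically $G$-invariant by simplicity and positivity, but the point deserves a sentence. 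With the equivariant regularity input supplied by citation, your argument is complete and equivalent to the paper's.
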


We obtain the equivariant $\frac{2\pi}{n}$-inequality using the Equivariant $\mu$-bubble separation Theorem \ref{thm:equivariant mu bubble separation} as
follows.
\begin{proposition}[Equivariant $\frac{2\pi}{n}$-inequality]\label{prop:equivariant 2pi/n}
    Suppose $(X^n,g)$ is a  compact Riemannian band acted upon by a compact
    Lie group $G$  with $G\cdot \bd_\pm X=\bd_\pm X$ and $3\leq {\rm codim}(G\cdot x)\leq 7$ for all $x\in X$.
    Assume further that there is no $G$-invariant separating 
    hypersurface admitting a $G$-invariant positive scalar curvature and 
    \begin{align*}
        R_g(x)\geq n(n-1),
    \end{align*}
    then 
    \begin{align*}
        \text{width}(X)\leq \frac{2\pi}{n}.
    \end{align*}
\end{proposition}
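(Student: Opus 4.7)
The plan is to argue by contradiction. Suppose $d := \text{width}(X) > \frac{2\pi}{n}$; the goal is to produce a $G$-invariant separating hypersurface carrying a $G$-invariant metric of positive scalar curvature, which contradicts the hypothesis.

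To apply Theorem \ref{thm:equivariant mu bubble separation}, first choose $\epsilon>0$ small enough that $(n(n-1)-\epsilon)\,d^{2}>\frac{4(n-1)\pi^{2}}{n}$; this is possible because $d>\frac{2\pi}{n}$ already forces $n(n-1)\,d^{2}>\frac{4(n-1)\pi^{2}}{n}$ with room to spare. Put $w(x)\equiv\epsilon$ (a $G$-invariant continuous function) and $w_{1}:=n(n-1)-\epsilon$, so that $R_{g}(x)\geq n(n-1)=w(x)+w_{1}$ and the width hypothesis of Theorem \ref{thm:equivariant mu bubble separation} is verified. Applying that theorem yields a smooth $G$-invariant separating hypersurface $Y\subset X$ and a $G$-invariant positive function $\phi$ on $Y$ with $R_{g_{\phi}}\geq\epsilon>0$, where $g_{\phi}:=g_{Y}+\phi^{2}\,dt^{2}$ on $Y\times\R$.

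The remaining step is the classical Schoen--Yau dimension reduction. The warped-product formula with one-dimensional fiber gives $R_{g_{\phi}}=R_{g_{Y}}-2\phi^{-1}\Delta_{g_{Y}}\phi$, hence $R_{g_{Y}}\phi-2\Delta_{g_{Y}}\phi\geq\epsilon\phi$. Now perform the $G$-invariant conformal change $\tilde g_{Y}:=\phi^{\frac{2}{n-2}}g_{Y}=u^{\frac{4}{n-3}}g_{Y}$ on the $(n-1)$-manifold $Y$, where $u:=\phi^{\alpha}$ with $\alpha:=\frac{n-3}{2(n-2)}$. This particular $\alpha$ is chosen precisely so that $\frac{4(n-2)\alpha}{n-3}=2$, which makes the $\Delta\phi$ contribution in the conformal Laplacian on $Y$ cancel against the inequality above; a direct computation then gives
\[
R_{\tilde g_{Y}}\,u^{\frac{n+1}{n-3}}\;\geq\;\epsilon\,\phi^{\alpha}+\frac{4(n-2)\alpha(1-\alpha)}{n-3}\,\phi^{\alpha-2}|\nabla\phi|^{2}\;>\;0,
\]
using $0<\alpha<1$. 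Since $\phi$ is $G$-invariant, so is $\tilde g_{Y}$, producing a $G$-invariant metric of positive scalar curvature on $Y$ and giving the desired contradiction.

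The substantive content of the argument sits entirely in Theorem \ref{thm:equivariant mu bubble separation}; once that is in hand, the present corollary reduces to a careful calibration of $\epsilon$ together with the standard warped-product-plus-conformal-change dimension reduction, executed equivariantly thanks to the $G$-invariance of $Y$ and $\phi$. The only mild point of care is dimensional: the conformal change above requires $n\geq 4$, while for $n=3$ one instead integrates $R_{g_{\phi}}\geq\epsilon$ against $\phi$ on the surface $Y$ and invokes Gauss--Bonnet to conclude $\chi(Y)>0$, immediately yielding a positive-scalar-curvature metric on $Y$.
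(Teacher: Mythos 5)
Your proof is correct and follows the same skeleton as the paper's: argue by contradiction, invoke Theorem \ref{thm:equivariant mu bubble separation} to produce the $G$-invariant separating hypersurface $Y$ and warping function $\phi$, and then convert the warped-product positivity $R_{g_Y}-2\Delta_{g_Y}\phi/\phi\geq\epsilon$ into a $G$-invariant metric of positive scalar curvature on $Y$ by a conformal change, contradicting the hypothesis. The one place you genuinely diverge is the conformal step, and your version is the more careful one: the paper rescales by $\phi^{4/(n-3)}$ (the Yamabe exponent applied to $\phi$ itself), which as written needs an additional eigenvalue-comparison argument, since the conformal Laplacian of the $(n-1)$-manifold $Y$ carries the coefficient $\tfrac{4(n-2)}{n-3}>2$ in front of $\Delta\phi$ and the sign of $\Delta\phi$ is not controlled pointwise; your choice $\tilde g_Y=\phi^{2/(n-2)}g_Y$ (equivalently $u^{4/(n-3)}g_Y$ with $u=\phi^{(n-3)/(2(n-2))}$) makes the inequality $R_{g_Y}\phi-2\Delta_{g_Y}\phi\geq\epsilon\phi$ close pointwise, the leftover gradient term having a favorable sign. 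Your explicit calibration $w\equiv\epsilon$, $w_1=n(n-1)-\epsilon$ also supplies the reason, left implicit in the paper, why the separation theorem yields strictly positive scalar curvature on $Y\times\R$. One small quibble in your $n=3$ aside: integrating against $\phi$ gives only the weighted inequality $\int_Y K\phi>0$, which Gauss--Bonnet does not directly convert into $\chi(Y)>0$; instead divide $R_{g_Y}\phi-2\Delta_{g_Y}\phi\geq\epsilon\phi$ by $\phi$ and integrate, so the Laplacian term contributes $\int_Y|\nabla\phi|^2/\phi^2\geq0$ and $\int_Y K>0$ follows (equivalently, take $\psi\equiv1$ in the stability inequality). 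That case is not needed for the paper's applications, and the paper's own exponent is undefined there anyway.
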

\begin{proof}
    We prove this Proposition by argument of contradiction.
    Assume the opposite, which means there is a Riemannian band $(X^n,g)$
    satisfying the assumptions but it has the width greater than $\frac{2\pi}{n}$.
    Then Theorem \ref{thm:equivariant mu bubble separation} implies that there is 
    a $G$-invariant separating hypersurface $Y$ and a $G$-invariant positive function $\phi$ defined on $Y$, such that 
    $(Y\times\mathbb{R},g_{\phi})$ has scalar curvature at least $\epsilon$ for some $\epsilon>0$. 
    
    Recall the scalar curvature of a warped product $g_{\phi}=g_Y+\phi(y)^2dt^2$ is:
    \begin{align}\label{warped product}
        R_{g_{\phi}}(y,t)=R_{g_Y}(y)-2\frac{\Delta_{g_Y}\phi}{\phi},
    \end{align}
    which implies that $(Y^{n-1},\phi^{\frac{4}{n-3}}g_{Y})$ is a $G$-invariant Riemannian manifold
    with positive scalar curvature.

    Therefore, $Y$ admits a $G$-invariant metric with positive scalar curvature, and contradiction is achieved. 
\end{proof}

\subsection{Preliminaries on the Equivariant $\mu$-bubble}
Before starting the proof of Theorem \ref{thm:equivariant mu bubble separation}, we first introduce some results on the equivariant $\mu$-bubble.

Consider an $n$-dimensional oriented Riemannian band $(X,g_{_X})$ acted isometrically on by a compact Lie group $G$ so that both $\bd_+X$ and $\bd_-X$ are $G$-invariant. 
Denote by 
\begin{itemize}
    \item $\C(X)$: the set of all Caccioppoli sets in $X$ which contain an open neighborhood of $\bd_-X$ and are disjoint from $\bd^+X$;
    \item $\C^G(X)$: the set of all $\Omega\in\C(X)$ with $g\cdot \Omega=\Omega$ for all $g\in G$.
\end{itemize}
Given any smooth function $h$ on $X$, let 
\[\A_h(\Omega) := \mH^{n-1}(\partial\Omega \cap \interior(X) ) - \int_{\Omega} h d\mH^n,\qquad \forall\Omega\in\C(X), \]
be the $\A_h$-functional on $\C(X)$, where $\bd\Omega$ denotes the reduced boundary of $\Omega$. 
\begin{definition}\label{Def: mu-bubble}
    A Caccioppoli set $\Omega\in\C(X)$ is said to be a {\em $\mu$-bubble}, if 
    \begin{align*}
        \A_h(\Omega) = \inf_{\hat{\Omega}\in\C(X)} \A_h(\hat{\Omega}).
    \end{align*}
\end{definition}

Using the averaging trick in \cite[Lemma 6.2]{wang2022existence} (initiated in \cite{lawson1972equivariant}), we show the following existence result on the equivariant $\mu$-bubble.
\begin{proposition}\label{Prop: equivariant mu-bubble}
    Suppose $3\leq {\rm codim}(G\cdot x)\leq 7$ for all $x\in X$, $h$ is $G$-invariant, and $H(\bd_{\pm}X)>\pm h$ on $\bd_\pm X$, where $H(\bd_{\pm}X)$ is the mean curvature of $\bd_{\pm}X$ with respect to the inward unit normal of $X$.
    Then there exists a smooth $G$-invariant $\mu$-bubble $\Omega\in\C(X)$ with $\A_h(\Omega) = \inf_{\hat{\Omega}\in\C_G(X)} \A_h(\hat{\Omega}) = \inf_{\hat{\Omega}\in\C(X)} \A_h(\hat{\Omega})$.
\end{proposition}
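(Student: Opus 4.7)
My proof plan breaks into three steps: a direct minimization producing a $G$-invariant competitor, an averaging argument matching the equivariant and unconstrained infima of $\A_h$, and equivariant regularity for the resulting prescribed-mean-curvature hypersurface.

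For Step 1 (existence of an equivariant minimizer), I would take a minimizing sequence $\{\Omega_i\} \subset \C^G(X)$. Since $|\int_{\Omega_i} h \, d\mH^n| \le \sup_X |h| \cdot \mH^n(X)$, the perimeters $\mH^{n-1}(\partial \Omega_i \cap \interior X)$ are uniformly bounded, so BV compactness extracts a subsequential $L^1$-limit $\Omega^{\ast}$. The property of being $G$-invariant passes to $L^1$ limits, and lower semicontinuity of perimeter together with continuity of the weighted volume term gives $\A_h(\Omega^{\ast}) \le \liminf_i \A_h(\Omega_i)$. The strict barrier hypotheses $H(\partial_{\pm} X) > \pm h$ allow one to construct $G$-invariant foliations by leaves of prescribed mean curvature $h$ in a collar of $\partial_{\pm} X$; these serve as barriers forcing $\partial \Omega^{\ast}$ to lie in the interior of $X$, by the standard $\mu$-bubble maximum-principle argument. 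Hence $\Omega^{\ast} \in \C^G(X)$ is an interior minimizer of $\A_h$ among equivariant competitors.

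For Step 2 (averaging trick, as in \cite{lawson1972equivariant} and adapted to $\mu$-bubbles in \cite{wang2022existence}), given any $\Omega \in \C(X)$, form the $G$-averaged BV function
\[
\tilde u(x) := \int_G \chi_{g \cdot \Omega}(x)\, dg
\]
against the normalized Haar measure. Since each $g \in G$ is an isometry of $(X,g_X)$ and $h$ is $G$-invariant, convexity of the total variation and $G$-invariance of the volume form yield
\begin{align*}
\|D \tilde u\|(\interior X) &\le \int_G \|D \chi_{g \cdot \Omega}\|(\interior X)\, dg = \mH^{n-1}(\partial \Omega \cap \interior X), \\
\int_X h\, \tilde u \, d\mH^n &= \int_\Omega h\, d\mH^n.
\end{align*}
Combining these with the coarea formula and the layer-cake identity gives
\[
\int_0^1 \A_h\bigl(\{\tilde u > t\}\bigr)\, dt \le \A_h(\Omega).
\]
Since $\partial_{\pm} X$ are $G$-invariant, $\tilde u \equiv 1$ in a neighborhood of $\partial_- X$ and $\tilde u \equiv 0$ near $\partial_+ X$, so $\{\tilde u > t\} \in \C^G(X)$ for every $t \in (0,1)$. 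Choosing $t_0$ with $\A_h(\{\tilde u > t_0\}) \le \A_h(\Omega)$ proves $\inf_{\C^G(X)} \A_h \le \inf_{\C(X)} \A_h$, while the reverse inequality is trivial. Thus the equivariant minimizer $\Omega^{\ast}$ from Step 1 also realizes the infimum over the whole of $\C(X)$, i.e.\ is a $\mu$-bubble in the sense of Definition \ref{Def: mu-bubble}.

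For Step 3 (regularity), $\Omega^{\ast}$ being an unconstrained $\A_h$-minimizer implies its reduced boundary is a prescribed-mean-curvature hypersurface with $H_{\partial \Omega^{\ast}} = h$. The hypothesis $3 \le \mathrm{codim}(G \cdot x) \le 7$ for every $x \in X$ ensures that the normal slice at each boundary point is of dimension at most $7$, so Federer's dimension-reduction argument applied in the slice rules out singularities of codimension-one area minimizers, giving smoothness of $\partial \Omega^{\ast}$ everywhere (cf.\ the equivariant regularity developed in \cite{wang2022existence}). The main technical obstacle is the averaging step — specifically, establishing the BV convexity inequality $\|D \tilde u\| \le \int_G \|D \chi_{g \cdot \Omega}\|\, dg$ and the correct coarea/layer-cake bookkeeping adapted to the $\A_h$ functional; once that bridge between equivariant and non-equivariant infima is in place, the remaining ingredients are fairly standard.
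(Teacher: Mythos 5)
Your proposal is correct and follows essentially the same route as the paper: a direct minimization over $G$-invariant competitors with barriers from the strict mean-curvature hypothesis, the Lawson-style averaging of the characteristic function combined with the coarea/layer-cake identities to show the equivariant and unconstrained infima of $\A_h$ coincide, and regularity from the codimension restriction $3\leq {\rm codim}(G\cdot x)\leq 7$. The only difference is notational — the paper phrases the averaging step in the language of normal currents and Federer's slicing results where you use BV convexity and the coarea formula — but these are equivalent formulations of the same argument.
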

\begin{proof}
    Since $\A_h(\hat{\Omega}) \geq \sup|h| \mH^n(X) > -\infty$, we have 
    \[ \I_G:=\inf_{\hat{\Omega}\in\C_G(X)} \A_h(\hat{\Omega}) \quad \geq\quad  \I:= \inf_{\hat{\Omega}\in\C(X)} \A_h(\hat{\Omega}) \quad>-\infty.\]
    Let $\Omega_{\pm}^t$ be the $t$-neighborhood of $\bd_{\pm}X$ for $t>0$ small enough. 
    It follows from the $G$-invariance of $\bd_\pm X$ that $\Omega_{\pm}^t$ are $G$-invariant too. 
    By the arguments in \cite[Lemma 4.2]{rade}, we have $\A_h((\hat{\Omega}\cup\Omega_-^t)\setminus\Omega_+^t) < \A_h(\hat{\Omega})$ for all $\hat{\Omega}\in\C(X)$ and $t>0$ small. 
    Hence, for any $G$-invariant minimizing sequence $\{\hat{\Omega}_k\}\subset\C_G(X)$ with $\A_h(\hat{\Omega}_k)\to\I_G$, we can replace $\hat{\Omega}_k$ by $(\hat{\Omega}_k\cup\Omega_-^t)\setminus\Omega_+^t$
     and apply the compactness theorem to obtain a limit $G$-invariant Caccioppoli set $\Omega = \lim_{k\to\infty} (\hat{\Omega}_k\cup\Omega_-^t)\setminus\Omega_+^t$ with $\A_h(\Omega)\leq \I_G$. 
    By our constructions, $\Omega$ contains an open neighborhood of $\bd_-X$ and is disjoint from $\bd_+X$, which further implies $\Omega\in\C_G(X)$ and $\A_h(\Omega)=\I_G$. 

    \begin{claim}\label{Claim: equivariant mu-bubble}
        $\I_G=\I$, i.e. $\A_h(\Omega)\leq \A_h(\hat{\Omega})$ for all $\hat{\Omega}\in\C(X)$. 
    \end{claim}
    \begin{proof}[Proof of Claim \ref{Claim: equivariant mu-bubble}]
        Given any $\hat{\Omega}\in\C(X)$, define 
        \[f(x):=\int_G 1_{\Clos(\hat{\Omega})}(g\cdot x)d\mu(g),\]
        where $1_{\Clos(\hat{\Omega})}$ is the characteristic function of $\Clos(\hat{\Omega})$, and $\mu$ is the bi-invariant Haar measure on $G$ normalized to $\mu(G)=1$. 
        Since $\hat{\Omega}\in \C(X)$, we have 
        \begin{equation}\label{Eq: averaged charact func}
            \mbox{$f=1$ near $\bd_-X$}\qquad {\rm and} \qquad \mbox{$f=0$ near $\bd_+X$}.
        \end{equation}
        Additionally, note $f$ is a $G$-invariant function on $X$, which is also upper-semicontinuous by Fatou’s Lemma. 
        Hence, 
        \[\hat{\Omega}_\lambda :=f^{-1}[\lambda,1] \]
        are $G$-invariant closed sets in $X$.   

        Consider the current $E_f:= f\cdot [[X]]$, where $[[X]]$ is the integral current induced by $X$. 
        Then for any $n$-form $\omega$ on $X$, we have
        \begin{align*}
            \bd E_f(\omega) &= \int_X\int_G \langle d\omega, \xi\rangle 1_{\Clos(\hat{\Omega})}(g\cdot x)d\mu(g) d\mH^{n}(x)\\
            &= \int_G\int_X \langle d\omega, \xi\rangle 1_{g^{-1}\cdot \Clos(\hat{\Omega})}(x) d\mH^{n}(x)d\mu(g)\\
            &= \int_G \bd(g^{-1}\cdot\hat{\Omega})(\omega) d\mu(g),
        \end{align*}
        which implies 
        $ \|E_f\|(\interior(X)) \leq \int_G \|\bd(g^{-1}\cdot\hat{\Omega})\|(\interior(X)) d\mu(g) = \|\bd \hat{\Omega}\|(\interior(X)) $, and $E_f$ is a normal current. 
        It then follows from \cite[4.5.9 (12)]{federer2014geometric} that $\bd\hat{\Omega}_\lambda$ is rectifiable for almost all $\lambda\in [0,1]$. 
        Combining with (\ref{Eq: averaged charact func}), we have $\hat{\Omega}_\lambda\in\C_G(X)$ for almost all $\lambda\in [0,1]$. 
        Hence, 
        \begin{equation}\label{Eq: average Ah 1}
            \A_h(\Omega) = \I_G \leq \int_0^1 \A_h(\hat{\Omega}_\lambda) d\lambda. 
        \end{equation}
        
        Next, by \cite[4.5.9 (13)]{federer2014geometric} and the above computations, we have $\bd E_f = \int_0^1\bd \hat{\Omega}_\lambda d\lambda$ and 
        \begin{equation}\label{Eq: average Ah 2.1}
            \int_0^1 \|\bd \hat{\Omega}_\lambda\|(\interior(X)) d\lambda = \|\bd E_f\|(\interior(X)) \leq  \|\bd \hat{\Omega}\|(\interior(X)). 
        \end{equation}
        In addition, we see from the definition of integration that 
        \begin{align*}
            \int_X fh d\mH^n(x) &= \int_{\{h\geq 0\}} fh d\mH^n(x) - \int_{\{h\leq 0\}} f(-h)\mH^n(x) \\ 
            &=  \int_0^1 (\mH^n\llcorner h_+) ( f^{-1}[\lambda, 1] ) d\lambda  - \int_0^1 (\mH^n\llcorner h_-) ( f^{-1}[\lambda, 1] ) d\lambda \\
            &= \int_0^1 \int_{f^{-1}[\lambda, 1]} h_+ - h_- d\mH^n(x)d\lambda= \int_0^1 \int_{\hat{\Omega}_\lambda} h d\mH^n(x)d\lambda,
        \end{align*}
        where $h_+ = \max\{h,0\}$ and $h_-=-\min\{h,0\}$. 
        On the other hand, we also have 
        \begin{align*}
            \int_X fh d\mH^n(x) &= \int_X\int_G h(g\cdot x)\cdot 1_{\Clos(\hat{\Omega})}(g\cdot x)d\mu(g) d\mH^n(x) \\
            &= \int_G\int_X h(x)\cdot 1_{g^{-1}\cdot \Clos(\hat{\Omega})}(x) d\mH^n(x) d\mu(g) \\
            &= \int_G (\mH^n\llcorner h)(g^{-1}\cdot \hat{\Omega}) d\mu(g) = \int_G \int_{\hat{\Omega}} h(x) d\mH^n(x) d\mu(g) \\
            &= \int_{\hat{\Omega}} h d\mH^n.
        \end{align*}
        Together, we have $\int_0^1 \int_{\hat{\Omega}_\lambda} h d\mH^n(x)d\lambda = \int_X fh d\mH^n(x) = \int_{\hat{\Omega}} h d\mH^n $, which implies the following inequality by (\ref{Eq: average Ah 2.1}):
        \begin{align}\label{Eq: average Ah 2}
            \int_0^1 \A_h(\hat{\Omega}_\lambda) d\lambda &= \int_0^1  \|\bd \hat{\Omega}_\lambda\|(\interior(X)) d\lambda  -  \int_0^1\int_{\hat{\Omega}_\lambda} h d\mH^n(x)  d\lambda \nonumber \\
            & \leq \|\bd \hat{\Omega}\|(\interior(X)) - \int_{\hat{\Omega}} h d\mH^n = \A_h(\hat{\Omega}).
        \end{align}
        Combining (\ref{Eq: average Ah 1}) with (\ref{Eq: average Ah 2}), we conclude $\A_h(\Omega)\leq\A_h(\hat{\Omega})$, and thus $\I_G=\I$. 
    \end{proof}
    The above claim implies $\Omega$ is a $G$-invariant $\mu$-bubble. Noting the regular/singular set of $\bd\Omega$ is $G$-invariant, the regularity
     of $\bd\Omega$ then follows immediately from the regularity theorem \cite[Theorem 2.2]{zhou2020existence} and the dimension assumption: $3\leq {\rm codim}(G\cdot x)\leq 7$ for all $x\in X$. 
\end{proof}

\subsection{Equivariant $\mu$-bubble separation Theorem}

Now, we write the proof of Theorem \ref{thm:equivariant mu bubble separation} following Gromov's notes \cite{gromov2019four}.
\begin{proof}[Proof of Theorem \ref{thm:equivariant mu bubble separation}]
    Without loss of generality, we can assume $w_1=n(n-1)$. By the 
    assumption, we know the width $d>\frac{2\pi}{n}$. Using the distance function, one can construct
    a $G$-invariant function $f: X\to [-\frac{\pi}{n},\frac{\pi}{n}]$, satisfying:
        \begin{align*}
            Lip(f)<1, \quad f(\partial_{-}X)=-\frac{\pi}{n}, \quad f(\partial_{+}X)=\frac{\pi}{n}.
        \end{align*}
    One can find a detailed construction of this function in \cite[Lemma 7.2]{cecchini2022scalar}, which can be easily modified to the $G$-invariant case.

    Suppose $(T^n,g_0)$ is the standard flat metric on $n$-torus. Let $M=T^n\times (-\frac{\pi}{n},\frac{\pi}{n})$ equipped 
    with the product metric 
    \begin{align*}
        g_M=\varphi(t)^2g_0+dt^2,
    \end{align*}
    where 
    \begin{align*}
        \varphi(t)=e^{\int_{-\frac{\pi}{n}}^t-\tan (\frac{n}{2}s)ds}.
    \end{align*}
    We can obtain the scalar curvature $R_{g_M}=n(n-1)$ and the mean curvature $H(T^n\times\{t\})=-(n-1)\tan(\frac{n}{2}t)$
    (with respect to the unit normal pointing towards $T^n\times\{-\frac{\pi}{n}\}$) by 
    straightforward computations (cf. \cite[\S 2.4]{gromov2019four}). 

    Now, we define $h: \interior(X) \to\mathbb R$ as 
    \begin{align*}
        h(x)=-(n-1)\tan(\frac{n}{2}f(x)). 
    \end{align*}
    Note $h\to\pm\infty$ as $x\to \bd_{\mp}X$, while the mean curvature of $\bd X$ is uniformly bounded. 
    Therefore, after shrinking $X$ to its interior $X\setminus B_\epsilon(\bd X)$ with $\epsilon>0$ 
    sufficiently small, we can apply Proposition \ref{Prop: equivariant mu-bubble} in $X\setminus B_\epsilon(\bd X)$ to obtain a smooth $G$-invariant $\mu$-bubble $Y\subset \interior (X\setminus B_\epsilon(\bd X))$, which is also a separating hypersurface in $X$. 

    Since $Y$ is the minimizer of the $\A_h$-functional, the second variation
    of $\A_h$ at $Y$ is non-negative. 
    More precisely, we obtain
    \begin{align*}
        \int_Y|\nabla^Y\psi|^2+\frac{1}{2}R_{g_Y}\psi^2\geq \int_{Y}\frac{1}{2}\left(R_g+\frac{n}{n-1}h^2+2g(\nabla^X h,\nu)\right)\psi^2,
    \end{align*}
    where $\psi$ is any smooth function on $Y$.

    Note that, for any $y\in Y$,
    \begin{align*}
        R_g+\frac{n}{n-1}h^2+&2g(\nabla^X h,\nu)\\
        &> w(y)+n(n-1)\left(1+\tan^2(\frac{n}{2}f(y))\right)-n(n-1)\sec^2(\frac{n}{2}f(y))\\
         &=w(y).
    \end{align*}
    Therefore, we conclude that $-\Delta_Y+\frac{1}{2}R_Y-\frac{1}{2}w(y)$ is a positive operator, hence the first eigenfunction 
    of this operator, denoted as $\phi$, is positive and $G$-invariant (by the strong maximum principle). 
    With \eqref{warped product}, we know $R_{g_{\phi}}\geq w(y)$ for any $(y,t)\in Y\times\mathbb R$. 
\end{proof}

\section{Proof of the Main Theorem}
In this section, we complete the proof of Theorem \ref{thm:equivariant s1 stability}. As a Corollary,
we obtain Theorem \ref{thm:compact s1 stability}. 

Before starting the proof of Theorem \ref{thm:equivariant s1 stability}, we prove an equivariant version of the surgery theory 
Proposition in R{\"a}de \cite{rade}, which is an essential ingredient in the proof of the Main Theorem. The non-equivariant version of this proposition is stated and proven in R{\"a}de's proof on the closed $S^1$-stability paper \cite{rade}. However, his proof 
cannot be directly generalized to the equivariant case, since there is no equivariant version of the handle cancellation theorem. 
As a matter of fact, the equivariant version of the handle cancellation theorem fails in general.

\begin{proposition}\label{prop:equivariant surgery}
    Suppose $Y^n$ is a closed oriented manifold of dimension at least $5$, 
    $X=Y\times [-1,1]$. Let the finite group $\mathbb Z_2$ act on $Y$ by reflection with $n$-dimensional $\mZ_2$-fixed points set ${\rm Fix}(Y)\times [-1,1]$ on $X$. 
    Assume further that there is a $\mathbb Z_2$-invariant separating hypersurface $\Sigma$ in $X$,
    then $Y$ can be obtained by $\Sigma$ by a finite sequence of $\mZ_2$-equivariant surgeries in codimension at least $3$.
    
\end{proposition}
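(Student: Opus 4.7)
The plan is to find a $\mZ_2$-equivariant handle decomposition of the cobordism $W_+$ from $\Sigma$ to $Y$---defined as the $\mZ_2$-invariant closure of the component of $X \setminus \Sigma$ containing $Y \times \{1\}$---in which every handle has index at most $n-2$. Such a decomposition immediately produces the desired sequence of $\mZ_2$-equivariant surgeries of codimension $\geq 3$ on the successive level sets, since attaching an index-$k$ handle in an $(n+1)$-dimensional cobordism corresponds to a surgery of codimension $n+1-k$ on the $n$-dimensional level set.

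First, by density of special $\mZ_2$-Morse functions (the cobordism analog of Remark \ref{Rem: density Morse function}), I would choose a special $\mZ_2$-Morse function $f : W_+ \to [0,1]$ with $f^{-1}(0) = \Sigma$ and $f^{-1}(1) = Y \times \{1\}$. Since the $\mZ_2$-reflection has codimension-one fixed stratum, every critical orbit of $f$ is of one of two types: (a) a free orbit $\{p, \sigma \cdot p\}$ in $W_+ \setminus \mathrm{Fix}(W_+)$, producing a pair of disjoint $\mZ_2$-related ordinary handles of equal index, or (b) a single fixed critical point on the $n$-dimensional submanifold $\mathrm{Fix}(W_+) = (\mathrm{Fix}(Y) \times [-1,1]) \cap W_+$, producing a $\mZ_2$-invariant handle centered on the fixed stratum. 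By Definition \ref{defn:special morse}, the index at a type (b) orbit coincides with the index of $f|_{\mathrm{Fix}(W_+)}$ at the fixed point.

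The central task is to arrange that every critical orbit of $f$ has index at most $n-2$, and this is exactly where R\"ade's non-equivariant argument cannot be transported verbatim owing to the absence of an equivariant handle cancellation theorem. The plan is to build $f$ directly with the correct index range by working stratum-by-stratum. On the $\mZ_2$-free part $W_+ \setminus \mathrm{Fix}(W_+)$, paired $\mZ_2$-related handles descend to single ordinary handles in the $\mZ_2$-quotient, to which R\"ade's non-equivariant manipulations (Whitney-disk moves, valid in ambient dimension $n+1 \geq 6$) apply, yielding the index bound $\leq n-2$ on free orbits. On the fixed stratum $\mathrm{Fix}(W_+)$, itself an $n$-dimensional separating cobordism inside the trivial product $\mathrm{Fix}(Y) \times [-1,1]$, R\"ade's non-equivariant proposition produces a Morse function with critical indices $\leq n-3$ whenever $\dim \mathrm{Fix}(Y) = n-1 \geq 5$, i.e.\ $n \geq 6$; the borderline case $n = 5$ requires a separate ad hoc argument on the four-dimensional $\mathrm{Fix}(Y)$, exploiting its product structure inside $X$.

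The main obstacle will be assembling these two stratum-wise Morse-theoretic constructions into a single globally defined special $\mZ_2$-Morse function on all of $W_+$ without generating new high-index critical orbits in the gluing region across the fixed set. To handle this, I would use a $\mZ_2$-equivariant tubular neighborhood of $\mathrm{Fix}(W_+)$ in $W_+$, which by the slice theorem is $\mZ_2$-equivariantly diffeomorphic to the normal line bundle with $\mZ_2$ acting on fibers by $\pm 1$. The fixed-stratum Morse function extends to this neighborhood by adding $\mZ_2$-invariant quadratic terms in the normal direction, introducing no new critical orbits there, and is then interpolated with the free-stratum Morse function via a $\mZ_2$-invariant cutoff. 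Specialness of the assembled $f$ then guarantees that the global critical orbits inherit the index bounds from the strata, yielding the required codimension-$\geq 3$ equivariant surgery decomposition from $\Sigma$ to $Y$.
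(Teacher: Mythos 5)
Your overall skeleton---a $\mZ_2$-Morse function on the cobordism $W_+$ built stratum by stratum, with a quadratic extension in the normal direction off the fixed set---is the same as the paper's, but there is a concrete gap that leaves exactly the case needed for the main theorem unproved. You demand index $\leq n-3$ (coindex $\geq 3$) for the Morse function on the $n$-dimensional fixed cobordism $\mathrm{Fix}(W_+)$, which forces you to run R\"ade's proposition on the $4$-dimensional $\mathrm{Fix}(Y)$ when $n=5$, where it does not apply; the ``separate ad hoc argument'' you defer to is precisely the case $n=5$ required by Theorem \ref{thm:compact s1 stability}. This requirement is too strong. Since the normal bundle of $\mathrm{Fix}(W_+)$ in $W_+$ is one-dimensional and the extension adds a positive-definite quadratic term in that direction, a critical point of coindex $c$ for $f|_{\mathrm{Fix}(W_+)}$ becomes a critical orbit of ambient coindex $c+1$. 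Hence only coindex $\geq 2$ is needed on the fixed stratum, i.e.\ one only has to remove the coindex-$0$ and coindex-$1$ handles of $\mathrm{Fix}(W_+)$; this is what the paper gets from Wall's handle cancellation after making the retraction $2$-connected on the fixed part, and it works for all $n\geq 5$.

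Two further steps are missing. First, R\"ade's manipulations (and any handle cancellation) require connectivity hypotheses on the retraction $r:W_+\to Y$ that do not hold for the raw cobordism: the paper first performs equivariant surgeries in the interior of $W$ to kill $\ker\pi_1(r|_{\mathrm{Fix}(W)})$ and $\ker\pi_i(r)$, $i=1,2$, on the principal part, and only then cancels handles. Your proposal never establishes these hypotheses. Second, the principal stratum $W_+\setminus\mathrm{Fix}(W_+)$ is an open manifold, not a compact cobordism, so ``descend to the quotient and apply R\"ade's manipulations'' is not literally available there, and your plan to fix up each stratum separately and then interpolate with a cutoff risks creating new critical points in the transition region (the two stratum functions need not be close where they are glued). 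The paper's order of operations avoids this: it first assembles one global \emph{special} $\mZ_2$-Morse function whose critical orbits near $\mathrm{Fix}(W_2)$ already have coindex $\geq 3$, and then cancels the remaining coindex-$\leq 2$ critical orbits inside the principal stratum by Hanke's argument for special $G$-Morse functions---which is the tool that substitutes for the nonexistent equivariant handle cancellation theorem.
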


To prove the above Proposition, we first follow R{\"a}de's proof of Proposition 6.4 in \cite{rade}, then we use an equivariant version of Morse function construction which is essentially introduced in the appendix of X.Zhou's work \cite{10.4310/jdg/1427202766} with minor modifications, finally, we adopt part of Hanke's proof of Theorem 15 \cite{hanke2008positive} to complete the above equivariant version of surgery theory Proposition.

\begin{proof}
   We use $W$ to denote the connected component of $X\setminus\Sigma$ which contains $\partial_-X\cong Y$. 
   Then $W$ is a $\mZ_2$-invariant cobordism of $Y$ and $\Sigma$ with $\mZ_2$-invariant retract map $r: W\to Y$, where $r$ is the restriction of the projection map from $X$ to $Y$ on $W$.
   If we further restrict to the fixed point set, we obtain a cobordism $\text{Fix}(W)$ of $\text{Fix}(Y)$ and $\text{Fix}(\Sigma)$ with a retract map 
   $\tilde{r}:\text{Fix}(W)\to\text{Fix}(Y)$, where $\tilde{r}=r|_{\text{Fix}(W)}$.

   Applying R{\"a}de's argument \cite[Proposition 6.4]{rade} to $\text{Fix}(W)$, we can kill the kernel of 
   \begin{align*}
    \pi_1(\tilde{r}):\pi_1(\text{Fix}(W))\to \pi_1(\text{Fix}(Y))
   \end{align*}
   by surgeries in the interior of $W$ (not ${\rm Fix}(W)$). 
   Note the above surgery around a closed curve $\gamma\subset \interior(\text{Fix}(W))$ is naturally $\mZ_2$-equivariant by the $\mZ_2$-invariance of $\gamma$.
   Hence, we obtain a $\mathbb Z_2$-cobordism $W_1$ between $Y$ and $\Sigma$, and a $\mathbb Z_2$-retract map $r_1:W_1\to Y$ which is $2$-connected restricted to $\text{Fix}(W_1)$.

    Denote by $W_1^{\text{prin}}:=W_1\setminus {\rm Fix}(W_1)$ and $Y^{\text{prin}}:=Y\setminus {\rm Fix}(Y)$ the unions of principal orbits (each of which contains $2$-components with $\mZ_2$ acting by permutations).
    Applying R{\"a}de's argument \cite[Proposition 6.4]{rade} to one component $W_1^{\text{prin}}/\mZ_2$ of $W_1^{\text{prin}}$ and reflecting to the other, 
    we can kill the kernel of $\pi_i(r_1):\pi_i(W_1^{\text{prin}})\to \pi_i(Y^{\text{prin}})$, ($i=1,2$), $\mZ_2$-equivariantly in the interior of $W_1^{\text{prin}}$
    and obtain a $\mathbb Z_2$-cobordism $W_2$ together with a $\mZ_2$-retract map $r_2: W_2\to Y$
    which is $3$-connected restricted to $W_2^{\text{prin}}=W_2\setminus {\rm Fix}(W_2)$, $2$-connected restricted to $\text{Fix}(W_2)$. 

   By the above construction, one can use handle cancellation (see CTC Wall's work in \cite{wall1971geometrical}), to get 
   rid of $0$ and $1$-handles in $\text{Fix}(W_2)$. In other words, there exists a Morse function  $f:\text{Fix}(W)\to [0,1]$,
   such that $f(\text{Fix}(Y))=1$, $f(\text{Fix}(\Sigma))=0$, and the critical points $\{p_1,p_2,\cdots,,p_N\}$
   of $f$ lie in the interior of $\text{Fix}(W_2)$ satisfying
   \begin{align*}
      0<f(p_1)<f(p_2)<&\cdots<f(p_N)<1;\\
       2\leq \text{coIndex}(p_1)\leq\text{coIndex}(p_2)&\leq\cdots\leq \text{coIndex}(p_N).
   \end{align*}

   Next, we extend this Morse function equivariantly to the entire $W_2$. Here, we adopt Zhou's construction \cite{10.4310/jdg/1427202766}
   with minor modifications.

    Firstly, let $\epsilon,\delta> 0 $ be sufficiently small so that $f$ has no critical point in the $2\epsilon$-neighborhood $B_{2\epsilon}(Y\cup\Sigma)$ of $Y\cup\Sigma$,
    the function $(\dist({\rm Fix}(W_2), \cdot))^2$ is smooth $\mZ_2$-invariant in $B_{2\delta}({\rm Fix}(W_2))\setminus (B_{\epsilon/2}(Y\cup\Sigma))$, and the nearest projection
    $P:B_{2\delta}({\rm Fix}(W_2))\to {\rm Fix}(W_2)$ is well defined and is a submersion in $B_{2\delta}({\rm Fix}(W_2))\setminus (B_{\epsilon/2}(Y\cup\Sigma))$. 
    Take a smooth cut-off function $\eta\in [0,1]$ on ${\rm Fix}(W_2)$ with 
    \[ \eta(x)  =\left\{
    \begin{array}{ll}
       0  & x\in {\rm Fix}(W_2)\cap \Clos(B_\epsilon(Y\cup \Sigma)) \\
       \in (0,1)  & x\in {\rm Fix}(W_2)\cap (B_{2\epsilon}(Y\cup\Sigma)\setminus \Clos(B_{\epsilon}(Y\cup\Sigma)))\\
       1 & x\in {\rm Fix}(W_2)\setminus B_{2\epsilon}(Y\cup \Sigma)
    \end{array}
    \right.
    \]
    Then by modifying $\tilde{f}(x) :=f(P(x))+ (\dist(x,\text{Fix}(W_2)))^2\cdot \eta(P(x))$ slightly in $B_\epsilon(Y\cup\Sigma)$, we obtain a smooth 
    $\mZ_2$-invariant function $\tilde{f}: B_{2\delta}({\rm Fix}(W_2)) \to [0,1]$ with no critical point in $B_{2\delta}({\rm Fix}(W_2))\cap (B_{2\epsilon}(Y\cup\Sigma))$ so that
    \begin{itemize}
        \item $\tilde{f}=1$ on $Y\cap B_{2\delta}({\rm Fix}(W_2))$, $\tilde{f}=0$ on $\Sigma\cap B_{2\delta}({\rm Fix}(W_2))$; 
        \item $x\in B_{2\delta}({\rm Fix}(W_2))$ is a critical point of $\tilde{f}$ if and only if $x\in {\rm Fix}(W_2)$ is a critical point of $f$; 
        \item ${\rm coIndex}(\tilde{f},x) = {\rm coIndex}(f,x) + 1 \geq 3$ for any critical point $x$ of $\tilde{f}$. 
    \end{itemize}
    Next, by shrinking $\epsilon>0$ even smaller, one can extend $\tilde{f}$ to a smooth $\mathbb Z_2$-invariant function on $W_2$ with no critical point in $B_\epsilon(Y\cup\Sigma)$ such that
    \begin{align*}
        \tilde{f}(Y)=1,\quad \tilde{f}(\Sigma)=0.
    \end{align*}
    Since $G$-invariant Morse function is dense in the smooth $G$-invariant functions \cite{wasserman1969equivariant} (which also has an analogue in Riemannian bands \cite[Corollary 8]{hanke2008positive}),
    there exists a $\mathbb Z_2$-invariant Morse function $\hat{f}:W_2\to[0,1]$ with no critical point in $B_{\epsilon/2}(Y\cup\Sigma)$ such that
   \begin{align*}
    \hat{f}(Y)=1,\quad \hat{f}(\Sigma)=0,\quad {\rm and}\quad \|\hat{f}-\tilde{f}\|_{C^2} \leq \tau,
   \end{align*}
    where $\tau$ is any small positive constant.

    Let $\varphi: \mathbb R_{\geq0}\to\mathbb R_{\geq 0}$ be cut-off function which is $1$ on $[0,\delta]$ and $0$ on $[2\delta,\infty]$.
    Then $\phi(x)=\varphi(\dist(x,\text{Fix}(W_2)))$ is a smooth $\mathbb Z_2$-invariant cut-off function on $W_2$.
    Define 
    \begin{align*}
        h=\phi(x)\tilde{f}+(1-\phi)\hat{f},
    \end{align*}
    $h$ is then a $\mathbb Z_2$-invariant Morse function.
    Indeed, for any $x\in W_2$ with $\text{dist}(x,\text{Fix}(W_2))\leq \delta$, $h(x)=\tilde{f}(x)$ is a $\mZ_2$-Morse function.
    Given $x\in W_2$ with $\delta\leq\text{dist}(x,\text{Fix}(W_2))\leq 2\delta$, since $|\nabla \tilde{f}|(x)$ is uniformly bounded away from $0$ for all such $x$ and 
   \begin{align*}
    |\nabla h|=|\nabla\tilde{f}+(1-\phi)(\nabla\hat{f}-\nabla\tilde{f})-\nabla\phi(\hat{f}-\tilde{f})|,
   \end{align*}
   we have $|\nabla h|(x)>0$ provided $\tau>0$ sufficiently small. 
   For any $x\in W_2$ with $\text{dist}(x,\text{Fix}(W_2))\geq 2\delta$, $h(x)=\hat{f}(x)$ is also a $\mZ_2$-Morse function. 
   Together, we conclude that $h$ is a $\mathbb Z_2$-invariant {\em special} Morse function in the sense of Definition \ref{defn:special morse}.

   Combining with the third bullet above, we are now in the position to cancel codimension $0, 1, 2$ handles in $W_2^{\text{prin}}$ equivariantly. 
   Indeed, this can be done by following Hanke's argument in the last part of the proof of \cite[Theorem 15]{hanke2008positive}, which removes the critical orbits of coindex $0,1,2$ in $W_2^{\text{prin}}$. 

   Consequently, we obtain a $\mathbb Z_2$-Morse function $\tilde{h}$ on $W_2$ with conidex of every critical orbit not less than $3$, which implies that $Y$ can be obtained from $\Sigma$ by finitely many steps of $\mZ_2$-equivariant surgeries with codimension greater or equal to $3$.
\end{proof}
\begin{remark}
    We believe Proposition \ref{prop:equivariant surgery} should still 
    hold for certain finite group actions (e.g. \cite[Corollary 16]{hanke2008positive}), and one can generalize Theorem \ref{thm:equivariant s1 stability} to these equivariant cases. 
\end{remark}

The motivation for proving Proposition \ref{prop:equivariant surgery} comes from the following generalization of the surgery result due to Gromov-Lawson and Schoen-Yau, which was stated in Bergery \cite{bergery1983scalar}, and the proof can also be found in Hanke \cite[Theorem 2]{hanke2008positive}. 

\begin{theorem}[cf. \cite{bergery1983scalar} Theorem 11.1]\label{equivariant gromov-lawson}
    Let $M$ be a (not necessarily compact) $G$-manifold equipped with a $G$-invariant metric of positive scalar curvature
    and let $N$ be obtained from $M$ by equivariant surgery of codimension at least $3$. Then $N$ carries a $G$-invariant
    metric with positive scalar curvature. 
\end{theorem}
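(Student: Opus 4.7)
The plan is to adapt Gromov--Lawson's surgery construction to the equivariant setting by carrying out their bending argument fiberwise in a $G$-equivariant tubular neighborhood of the surgery core. Let $S \subset M$ denote the $G$-invariant surgery core; the hypothesis that the surgery is equivariant of codimension $q \geq 3$ means that a $G$-invariant neighborhood $U$ of $S$ in $M$ is $G$-diffeomorphic to the total space of a $G$-equivariant vector bundle $E \to S$ of fiber rank $q \geq 3$, and the surgery replaces $U$ by the complementary handle glued along the sphere bundle $S(E)$. By the equivariant tubular neighborhood theorem, together with an averaging of any background fiber metric over $G$, we may assume that $E$ carries a $G$-invariant fiberwise inner product, so the radial distance function $r(v) = |v|_E$ on $U$ is $G$-invariant.

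The first step is the Gromov--Lawson bending deformation inside $U$. One chooses a radial profile $\gamma(r)$ which equals $r$ outside a larger tube and, inside a tube of radius $\varepsilon$, bends so that the metric becomes a cylindrical product of the form $g_S + dt^2 + \varepsilon^2 g_{S^{q-1}}$ terminating at $S(E)_\varepsilon$. Since $\gamma$ depends only on the $G$-invariant coordinate $r$, the deformed metric $\tilde g$ is automatically $G$-invariant. The scalar curvature estimate is the classical one: the dominant positive contribution $(q-1)(q-2)/\varepsilon^2$ coming from the intrinsic curvature of $S^{q-1}$ overwhelms the bounded error terms introduced by the bending provided $\varepsilon$ is small and $\gamma$ has sufficiently gentle curvature. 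This is exactly where the assumption $q \geq 3$ is consumed.

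The second step is to cap off the cylindrical end by attaching the complementary handle with an equivariant \emph{torpedo} metric. On each fiber $D^q$ one picks a rotationally symmetric positive scalar curvature metric with a round cylindrical end of radius $\varepsilon$; such a metric exists precisely because $q \geq 3$, and it is $O(q)$-invariant, hence $G$-invariant under the orthogonal $G$-action on the fiber. Assembling these torpedoes fiberwise over $S$ yields a $G$-invariant PSC metric on the replacement handle that matches $\tilde g$ along $S(E)_\varepsilon$, and gluing produces the desired $G$-invariant PSC metric on $N$.

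The main obstacle is essentially bookkeeping rather than new analysis: the delicate scalar curvature inequality underlying the bending and torpedo arguments is identical to the non-equivariant case, because the profile $\gamma$ and the torpedo are radial and therefore automatically $G$-invariant once the fiber metric on $E$ is $G$-invariant. The only genuinely equivariant ingredients are the equivariant tubular neighborhood theorem and the averaging of the fiber metric, both of which are standard. As carried out in Hanke \cite{hanke2008positive}, once the $G$-equivariant normal bundle structure is in place, the classical Gromov--Lawson and Schoen--Yau constructions go through verbatim to yield the $G$-invariant PSC metric on $N$.
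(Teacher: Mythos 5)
Your proposal is correct and follows essentially the same route as the paper, which does not prove this theorem itself but cites Bergery (Theorem 11.1) and Hanke (Theorem 2), whose argument is exactly the equivariant tubular neighborhood plus averaged fiber metric, radial Gromov--Lawson bending, and fiberwise $O(q)$-invariant torpedo capping that you describe. The codimension-$\geq 3$ hypothesis is consumed precisely where you place it, in the $(q-1)(q-2)/\varepsilon^2$ term, so no further commentary is needed.
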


Now we have all the ingredients we need to complete the proof of Theorem \ref{thm:equivariant s1 stability}.

\begin{proof}[Proof of Theorem \ref{thm:equivariant s1 stability}]
    Suppose $M^n$ admits a $\mathbb Z_2$-metric of positive scalar curvature, then it is immediately true that 
    $M^n\times S^1$ admits a $\mathbb Z_2$-metric of positive scalar curvature. 
    
    For the other direction, we argue by contradiction. Suppose $M^n\times S^1$ admits a $\mathbb Z_2$-metric
    $g_{M\times S^1}$ of positive scalar curvature, we assume that $M^n$ does not admit a $\mathbb Z_2$-metric of positive scalar curvature. 
    Then lift $g_{M\times S^1}$ to the covering space $(M^n\times\mathbb R, g_{M\times\R})$. For any interval 
    $(a,b)\subset\mathbb R$, the Riemannian band $(M^n\times (a,b),g_{M\times\R})$ does not have any separating $\mathbb Z_2$-hypersurface which admits 
    a $\mathbb Z_2$-metric with positive scalar curvature. 
    Otherwise, by Proposition \ref{prop:equivariant surgery} and Theorem \ref{equivariant gromov-lawson}, $M^n$ admits a 
    $\mathbb Z_2$-metric with positive scalar curvature, contradiction achieved. 
    Then we can apply Proposition \ref{prop:equivariant 2pi/n} to conclude that the width of $M^n\times(a,b)$ is bounded above 
    by a uniform constant only depending on $g_{M\times S^1}$. 
    This leads to a contradiction since $|b-a|$ can be arbitrarily large, and the width of the corresponding band goes to infinity as $|b-a|\to\infty$.

    Therefore, we have shown that 
    \begin{align*}
        \sigma_{\mathbb Z_2}(M^n)>0\iff \sigma_{\mathbb Z_2}(M^n\times S^1)>0.
    \end{align*}
\end{proof}

Finally, we prove Theorem \ref{thm:compact s1 stability}.

\begin{proof}[Proof of Theorem \ref{thm:compact s1 stability}]
    If $n=3$, this follows directly from the classification of compact $3$-manifolds, see \cite{carlotto2021constrained}.

    Now we assume $n=5,6$.
    
    Suppose $M^n$ admits a $\mathbb Z_2$-metric of positive scalar curvature in the interior 
    and the boundary is minimal. Then by taking the product metric, we know the same is true 
    for $M^n\times S^1$.

    For the other direction, we need to use Theorem \ref{thm:equivariant s1 stability}. 
    Suppose $\sigma(M^n\times S^1)>0$, then it admits a metric of positive scalar curvature in the interior, and the boundary has positive mean curvature, see \cite{escobar1992conformal}. 
    Denote by $DM^n$ the doubled manifold of $M$.
    Adopting Gromov-Lawson's construction 
    in Theorem 5.7 of \cite{gromov1980spin}, we can obtain a $\mathbb Z_2$-metric on the doubled manifold $DM^n\times S^1$ with
    positive scalar curvature, which by Theorem \ref{thm:equivariant s1 stability} implies that 
    $DM^n$ admits a $\mathbb Z_2$-metric with positive scalar curvature. 
    Restrict this metric on $M^n$, we obtain a metric on $M^n$ with positive scalar curvature in the 
    interior and a minimal boundary. Therefore, we conclude that $\sigma(M^n)>0$.
\end{proof}

\bibliographystyle{abbrv}

\providecommand{\bysame}{\leavevmode\hbox to3em{\hrulefill}\thinspace}
\providecommand{\MR}{\relax\ifhmode\unskip\space\fi MR }
\providecommand{\MRhref}[2]{%
  \href{http://www.ams.org/mathscinet-getitem?mr=#1}{#2}}
\providecommand{\ref}[2]{#2}

\bibliography{reference}   

\end{document}